\numberwithin{equation}{section}
\newcommand{\laxtimes}[1]{\mathop{\times\mkern-13mu\raise1.3ex\hbox{$\scriptscriptstyle\to$}_{#1}}}
\DeclareMathOperator{\Hom}{Hom}
\DeclareMathOperator{\id}{id}
\DeclareMathOperator*{\colim}{colim}
\DeclareMathOperator{\Cris}{Cris}
\DeclareMathOperator{\Sym}{Sym}
\newcommand{\cC}{\mathcal{C}}
\newcommand{\cD}{\mathcal{D}}
\newcommand{\D}{\mathrm{D}}
\newcommand{\FSmAlg}{\mathrm{FSmAlg}}
\newcommand{\FSmAff}{\mathrm{FSmAff}}
\newcommand{\PFSmAlg}{\mathrm{PFSmAlg}}
\newcommand{\SmAlg}{\mathrm{SmAlg}}
\newcommand{\SmAff}{\mathrm{SmAff}}
\newcommand{\Sm}{\mathrm{Sm}}
\newcommand{\fSm}{\mathrm{FSm}}
\newcommand{\Ch}{\mathrm{Ch}}
\newcommand{\bc}{\mathrm{BC}}
\DeclareMathOperator{\Fun}{Fun}
\DeclareMathOperator{\Nerve}{N}
\DeclareMathOperator{\Map}{Map}
\newcommand{\sSet}{\mathbf{sSet}}
\newcommand{\Sh}{\mathrm{Sh}}
\newcommand{\PSh}{\mathrm{PSh}}
\newcommand{\op}{\mathrm{op}}
\newcommand{\dR}{\mathrm{dR}}
\newcounter{zaehler}
\theoremstyle{plain}
\theoremstyle{plain}
\newtheorem{thm}{Theorem}[section]
\newtheorem{lem}[thm]{Lemma}
\newtheorem{lemma}[thm]{Lemma}
\newtheorem{prop}[thm]{Proposition}
\newtheorem*{conj*}{Conjecture}
\theoremstyle{definition}
\newtheorem{ex}[thm]{Example}
\newtheorem{defn}[thm]{Definition}
\newtheorem{claim}[thm]{Claim}
\newtheorem{rmk}[thm]{Remark}
\title{A remark on crystalline cohomology}
\author{Moritz Kerz}
\address{Fakult\"at f\"ur Mathematik, Universit\"at Regensburg, 93040 Regensburg, Germany}
\email{moritz.kerz@ur.de}
\author{Georg Tamme}
\address{Institut f\"ur Mathematik, Fachbereich 08, Johannes Gutenberg-Universit\"at Mainz, D-55099 Mainz, Germany}
\email{georg.tamme@uni-mainz.de}
\thanks{MK is partially supported by the DFG through CRC 1085 \textit{Higher Invariants} (Universit\"at Regensburg). GT is partially supported by the DFG through TRR 326 (Project-ID 444845124).}
\subjclass[2020]{Primary 14F30}
\keywords{Crystalline cohomology, PD de Rham cohomology}
\date{\today}
\begin{document}

\begin{abstract}
We propose a new approach to crystalline cohomology based on the observation that one can lift smooth algebras uniquely ``up to coherent homotopy.''
\end{abstract}

\maketitle

\section{Introduction}

In this note, we explore an approach to crystalline cohomology using certain homotopies
congruent to the identity. In the introduction we present the idea in its simplest
setting. Let $k$ be a
perfect field of characteristic $p > 0$ and let $R$ be its ring of Witt vectors. For a
smooth $k$-algebra $\overline{A}$, a classical approach to defining a good cohomology
theory, see, for example, \cite{MonskyWashnitzer,GrothendieckCrystals}, is to lift $\overline A$  to a $p$-adic
complete smooth $R$-algebra $A$ and to consider the $p$-complete de Rham cohomology of $A$
over $R$.

The issue with this naive approach is the difficulty of gluing this cohomology theory for more global objects over $k$. Indeed, the lift $A$ over $R$ for a given $\overline{A}$ is unique only up to non-unique isomorphism. This non-uniqueness causes significant problems. To address this, consider two lifts $A_1$ and $A_2$ of $\overline{A}$ and two isomorphisms
\[
\phi_1, \phi_2\colon A_1 \xrightarrow{\sim} A_2
\]
which are congruent to the identity modulo $p$. Using the smoothness of $A_1$, one can construct a formal homotopy
\[
h\colon A_1 \to A_2\llbracket T \rrbracket
\]
such that the composition of $h$ with the  map $T \mapsto 0$ is $\phi_1$, and the composition of $h$ with the  map $T \mapsto p$ is $\phi_2$. Using a Poincaré lemma, one then shows that $\phi_1$ and $\phi_2$ induce the same map on de Rham cohomology in the derived category of $p$-complete $R$-modules $\D(R)^\wedge$.

However, the next challenge in globalizing this construction is that $h$ is again not unique. Nevertheless, it is unique up to a certain formal homotopy, and so on. The aim of this note is to make this idea precise using the language of higher category theory, thereby providing a new approach to crystalline cohomology.

In this note, we do not discuss the notion of crystals within this framework but focus
solely on crystalline cohomology with trivial coefficients.

Let us remark that there is a methodologically related approach to derived crystalline cohomology in Mao's paper \cite{MaoDerived} using free simplicial resolutions. 

\medskip

\noindent \textbf{Acknowledgement.} 
This project arose from discussions with Shuji Saito and this note profited a lot from
his input. Akhil Mathew suggested the line of argument in
  Section~\ref{sec:simplicial} to us thereby simplifying our original argument considerably. 
We thank Niklas Kipp and Sebastian Wolf for  discussions which led to Remark~\ref{rmk:not-loc}, and Federico Binda, Denis-Charles Cisinski and Kay Rülling for valuable comments and discussions. Finally, we would like to thank the referee for helpful comments.

\medskip

\noindent \textbf{Notation.} We denote by $\Delta[m] \in \sSet$ the standard, simplicial $m$-simplex, i.e.~the functor represented by $[m]$. We denote by $\partial\Delta[m]$ its boundary.

\section{A simplicial ring}
	\label{sec:simplicial}

Let $R$ be a ring and let $\pi\in R$ be a non-zero divisor. Set $\overline R=R/(\pi)$.  We
assume that $\overline R\ne 0$.
Consider the simplicial ring
$R\Delta_\bullet$ with $R\Delta_m = R\llbracket T_0, \ldots , T_m\rrbracket$ and with
\[
R\Delta_m(\sigma)     \colon R\Delta_m \to R \Delta_n,\quad R\Delta_m(\sigma) ( T_i) = \sum_{j\in \sigma^{-1}(i)} T_j
\]
for a morphism $\sigma\colon [n]\to [m]$.

\begin{lem}\label{lem:simplring1}
The canonical map $ R\Delta_\bullet \xrightarrow{\sim} R$ induced by $T_i \mapsto 0$ is a trivial Kan fibration.
\end{lem}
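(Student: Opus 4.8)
The plan is to check, by hand, the defining right lifting property of a trivial Kan fibration against every boundary inclusion $\partial\Delta[m]\hookrightarrow\Delta[m]$. Since the target $R$ is discrete, such a lifting problem for $R\Delta_\bullet\to R$ becomes very concrete. First one reads off from the displayed formula, applied to the cofaces $\delta_i\colon[m-1]\to[m]$, that the $i$-th face operator $d_i$ of $R\Delta_\bullet$ is — after the evident relabelling of variables — the substitution $T_i\mapsto 0$. Hence a map $\partial\Delta[m]\to R\Delta_\bullet$ is a tuple of power series $g_0,\dots,g_m$ with $g_i\in R\llbracket T_j : j\neq i\rrbracket$, subject to the compatibility that setting $T_j=0$ in $g_i$ agrees with setting $T_i=0$ in $g_j$ for all $i\neq j$; and the commuting square with $\Delta[m]\to R$ merely records that all the $g_i$ have the same constant term. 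So the statement comes down to: given such a compatible tuple (for $m\ge 1$), produce $g\in R\llbracket T_0,\dots,T_m\rrbracket$ with $g|_{T_i=0}=g_i$ for all $i$ — the constant-term condition then being automatic. (For $m=0$ the assertion is just the surjectivity of $R\llbracket T_0\rrbracket\to R$.)

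To build $g$ I would use inclusion--exclusion over the subsets of $\{0,\dots,m\}$. For nonempty $I\subseteq\{0,\dots,m\}$, iterating the compatibility shows that the element of $R\llbracket T_j:j\notin I\rrbracket$ obtained from $g_i$ by setting $T_j=0$ for all $j\in I\smallsetminus\{i\}$ does not depend on the choice of $i\in I$; call it $g_I$, so that $g_{\{i\}}=g_i$. Then set
\[
  g \;=\; \sum_{\varnothing\neq I\subseteq\{0,\dots,m\}}(-1)^{|I|+1}\,g_I,
\]
a finite sum, regarded inside $R\llbracket T_0,\dots,T_m\rrbracket$. Setting $T_k=0$ carries the summand $g_I$ to $g_I$ if $k\in I$ and to $g_{I\cup\{k\}}$ if $k\notin I$; reindexing the latter terms by $J=I\cup\{k\}$ and pairing them up, every contribution cancels against one of opposite sign except for the term $g_{\{k\}}=g_k$. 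Thus $g|_{T_k=0}=g_k$ for every $k$, which is precisely the lift we wanted.

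I do not expect a genuinely hard step here: the only thing requiring care is the bookkeeping — correctly matching the combinatorial description of maps out of $\partial\Delta[m]$ with the algebra of the face operators of $R\Delta_\bullet$, and getting the signs right in the cancellation. A more conceptual variant, which I would record as a remark, is this: $R\Delta_\bullet$ underlies a simplicial abelian group and $R\Delta_\bullet\to R$ is a surjective homomorphism, hence automatically a Kan fibration; writing $R\Delta_m=\widehat{\Sym}_R(M_m)$ for the levelwise free simplicial $R$-module $M_\bullet$ with $M_m=\bigoplus_{i=0}^m R\,T_i$, one computes that the normalized chain complex of $M_\bullet$ is $[R\xrightarrow{\;\sim\;}R]$ concentrated in degrees $1$ and $0$, hence acyclic, so $M_\bullet\simeq 0$; since each $\Sym^n$ preserves weak equivalences between levelwise free simplicial modules, the augmentation ideal $\ker(R\Delta_\bullet\to R)=\prod_{n\ge 1}\Sym^n(M_\bullet)$ is acyclic, and a surjection of simplicial abelian groups with acyclic kernel is a trivial Kan fibration. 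In that variant the one non-formal ingredient is the homotopy invariance of $\Sym^n$ on levelwise free simplicial modules, which is standard but not entirely tautological.
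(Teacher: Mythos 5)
Your main argument — a direct verification of the right lifting property against $\partial\Delta[m]\hookrightarrow\Delta[m]$, building the filler by inclusion--exclusion over the family $(g_I)_{\varnothing\neq I\subseteq\{0,\dots,m\}}$ — is correct, and it takes a genuinely different route from the paper. The paper instead observes that $R\Delta_\bullet\to R$ is a surjection of simplicial abelian groups, hence automatically a Kan fibration, and then shows that the kernel $I_\bullet=\prod_{n>0}\Sym^n_R(S_\bullet)$ has vanishing homotopy groups because the free simplicial $R$-module $S_\bullet$ with $S_m=R^{[m]}$ is contractible; that is exactly the ``more conceptual variant'' you sketch at the end, down to the identification of $N(S_\bullet)$ with $[R\xrightarrow{\sim}R]$ in degrees $1,0$. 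Your elementary argument has the merit of being entirely self-contained and of producing an explicit filler, at the cost of the combinatorial bookkeeping you flag (checking the signs cancel and that the constant-term constraint is automatic); the paper's proof is shorter but relies on the two facts you correctly single out as non-tautological, namely the contractibility of $S_\bullet$ and the homotopy-invariance of $\Sym^n$ on levelwise free simplicial modules. Both arguments are valid; there is nothing to correct.
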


\begin{proof}
Recall that any surjection of simplicial groups is a Kan fibration. 
Consider the short exact sequence of simplicial groups
  \[
   0 \to  I_\bullet \to R \Delta_\bullet \to  R \to 0.
 \]
 We have to show that $\pi_i(I_\bullet)=0$ for all $i\ge 0$. Let $S_\bullet$ be the free
 simplicial $R$-module with $S_m=R^{[m]}$. It is well known that $S_\bullet$ (often denoted $ER$) is
 contractible. One way to see this is to observe that $S_{\bullet}$ is the nerve of the category with set of objects given by $R$ and with a unique morphism between any two objects. Since this category has an initial object, its nerve is contractible. It follows that $I_\bullet = \prod_{n>0} \mathrm{Sym}_R^n(S_\bullet)$ is also
 contractible as a simplicial $R$-module.
\end{proof}

Consider the simplicial ring $R\Delta^\pi_\bullet$ with
\[
  R \Delta^\pi_m=R\Delta_m/(T_0 +
  \cdots + T_m -\pi).
\]

\begin{lem}\label{lem:simplring2}
The canonical map $R\Delta^\pi_\bullet \xrightarrow{\sim} \overline R$ induced by
$T_i\mapsto 0$ is a trivial Kan fibration.
\end{lem}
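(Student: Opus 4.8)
The plan is to reduce this to Lemma~\ref{lem:simplring1} by exhibiting $R\Delta^\pi_\bullet$ as a suitable base change or quotient whose fibers over $\overline{R}$ are controlled. First I would observe that $R\Delta^\pi_m = R\llbracket T_0, \dots, T_m \rrbracket / (T_0 + \cdots + T_m - \pi)$, and that modding out further by $\pi$ gives $\overline{R}\llbracket T_0, \dots, T_m \rrbracket / (T_0 + \cdots + T_m) = \overline{R}\Delta^0_m$, i.e.\ the analogue of the construction in Lemma~\ref{lem:simplring1} but for the ring $\overline{R}$ with the zero non-zero-divisor replaced by the relation $\sum T_i = 0$. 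Since any surjection of simplicial abelian groups is a Kan fibration, the map $R\Delta^\pi_\bullet \to \overline{R}$ is automatically a Kan fibration; the content is that it is a weak equivalence, equivalently that the kernel $J_\bullet$ (a simplicial abelian group) has vanishing homotopy groups.

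The key step is to identify $J_\bullet$ explicitly. I would first treat the "model" case: let $U_\bullet$ be the simplicial abelian group with $U_m = \overline{R}\llbracket T_0, \dots, T_m\rrbracket / (T_0 + \cdots + T_m)$ and show it is contractible. For this, note that $\overline R\llbracket T_0,\dots,T_m\rrbracket/(T_0+\cdots+T_m)\cong \overline R\llbracket T_1,\dots,T_m\rrbracket$, and the simplicial structure here is exactly that of the completed symmetric algebra on the reduced standard simplex $\widetilde{S}_\bullet$, where $S_\bullet$ is the contractible simplicial module $S_m = R^{[m]}$ from the previous proof and $\widetilde S_\bullet = \ker(S_\bullet \to R)$ (sum of coordinates). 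Since $S_\bullet$ is contractible and splits off the constant $R$, the complement $\widetilde S_\bullet$ is contractible too, hence so is each $\widehat{\Sym}^n_{\overline R}(\widetilde S_\bullet)$ and thus $U_\bullet \cong \prod_{n \geq 0} \widehat{\Sym}^n_{\overline{R}}(\widetilde{S}_\bullet \otimes \overline R)$ — the same computation as in Lemma~\ref{lem:simplring1}.

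Then I would filter $R\Delta^\pi_\bullet$ by powers of $\pi$: because $\pi$ is a non-zero-divisor in $R$ and $R\Delta_m$ is $\pi$-torsion-free (it is a power series ring over $R$), one checks that $R\Delta^\pi_m$ is $\pi$-torsion-free as well — here one uses that $T_0 + \cdots + T_m - \pi$ is a non-zero-divisor with quotient having no $\pi$-torsion, e.g.\ by the isomorphism $R\Delta^\pi_m \cong R\llbracket T_1, \dots, T_m\rrbracket$ sending $T_0 \mapsto \pi - \sum_{i\geq 1} T_i$. Hence for each $r \geq 0$ the graded piece $\pi^r R\Delta^\pi_\bullet / \pi^{r+1} R\Delta^\pi_\bullet$ is isomorphic, as a simplicial $\overline R$-module, to $U_\bullet$, which is contractible. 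A filtration/completeness argument (the kernel $J_\bullet$ is $\pi$-adically complete and $\bigcap_r \pi^r = 0$ on these power series rings, and homotopy groups commute with the relevant limit since the tower has surjective transition maps, or one argues degreewise that $J_m = \pi R\Delta^\pi_m$ and uses the $\pi$-adic filtration with contractible graded pieces) then gives $\pi_i(J_\bullet) = 0$ for all $i$.

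The main obstacle I anticipate is the completeness bookkeeping in the last step: one must be careful that "contractible graded pieces plus complete filtration" really forces contractibility of $J_\bullet$, since homotopy groups do not commute with infinite limits in general. The clean way around this is to work one degree at a time is not enough — instead I would present $J_\bullet$ itself as $\prod_{n \geq 1}(\text{something contractible})$ directly, paralleling the proof of Lemma~\ref{lem:simplring1}: writing $R\Delta^\pi_\bullet \cong \widehat{\Sym}_R(\widetilde S_\bullet \otimes R)$-style and identifying $J_\bullet$ with the part of positive "weight" after imposing $\sum T_i = \pi$, which is a product of completed symmetric powers of the contractible module $\widetilde S_\bullet$ (now over $R$, not $\overline R$), and a product of contractible simplicial modules is contractible. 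That avoids any subtle limit argument.
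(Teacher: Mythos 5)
Your plan rests on the claim that $U_\bullet$ (with $U_m = \overline R\llbracket T_0,\dots,T_m\rrbracket/(T_0+\cdots+T_m)$) and $\widetilde S_\bullet$ are contractible, but neither is, and this is a fatal gap. Already the sentence ``$S_\bullet$ is contractible and splits off the constant $R$, the complement $\widetilde S_\bullet$ is contractible too'' is self-contradictory: a contractible simplicial module cannot contain a nonzero constant direct summand. In fact the ``sum of coordinates'' $S_\bullet \to R$ is not a morphism of simplicial modules, since the structure map for $\sigma\colon[n]\to[m]$ sends $e_i^{(m)}\mapsto\sum_{j\in\sigma^{-1}(i)}e_j^{(n)}$, whose coordinate sum is $|\sigma^{-1}(i)|$, not $1$; so $\ker(S_\bullet\to R)$ is not a simplicial submodule. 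The genuine simplicial map is $R\hookrightarrow S_\bullet$, $1\mapsto\sum_i e_i$, and if you take its cokernel $\widetilde S_\bullet$, the long exact sequence gives $\pi_1(\widetilde S_\bullet)\cong R$ — again not contractible. Concretely for $U_\bullet$: $U_0=\overline R$, and $d_0=d_1$ on $U_1$ (both kill $T_1$ under $U_1\cong\overline R\llbracket T_1\rrbracket$), so $\pi_0(U_\bullet)=\overline R\neq 0$. Thus the proposed $\pi$-adic filtration does not have contractible graded pieces, and the alternative ``positive-weight'' decomposition of $J_\bullet$ does not exist either, because $\sum_i T_i-\pi$ is not homogeneous in the $T$-grading and the weight decomposition of $R\Delta_\bullet$ does not descend to $R\Delta^\pi_\bullet$.

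The proof in the paper avoids all of this and is much shorter. Since $\sum_i T_i-\pi$ is a non-zero-divisor in each $R\Delta_m$ and is invariant under the simplicial structure maps, multiplication by it yields a short exact sequence of simplicial $R$-modules $0\to R\Delta_\bullet\to R\Delta_\bullet\to R\Delta^\pi_\bullet\to 0$, which the augmentations from Lemma~\ref{lem:simplring1} map to the short exact sequence $0\to R\xrightarrow{-\pi}R\to\overline R\to 0$. The two left vertical maps are weak equivalences by Lemma~\ref{lem:simplring1}, so by the five lemma on homotopy long exact sequences $R\Delta^\pi_\bullet\to\overline R$ is a weak equivalence; being a surjection of simplicial abelian groups, it is therefore a trivial Kan fibration. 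If you want a direct argument in the style you propose, the object to analyze is the augmentation kernel $J_\bullet$ itself, not $U_\bullet$; but the two-out-of-three comparison above is the clean route.
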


\begin{proof}
  Consider the following commutative diagram of simplicial $R$-modules with exact rows.
  \[
  \begin{tikzcd}[column sep = 1.7cm]
  0 \ar[r] &  R\Delta_\bullet \ar[r, "\sum_i T_i  -\pi"] \ar[d, "\wr"] & R\Delta_\bullet \ar[r] \ar[d, "\wr"] &  R\Delta^\pi_\bullet \ar[r] \ar[d, "\wr"] & 0 \\
0 \ar[r] &  R \ar[r, "-\pi"] & R \ar[r] & \overline R \ar[r] & 0 
  \end{tikzcd}
  \]
  The two vertical arrows on the left are equivalences by
Lemma~\ref{lem:simplring1}, so the right vertical map is an equivalence too.
\end{proof}

Set
\[
  R\partial\Delta_m^\pi = R\Delta_m^\pi  /(T_0 \cdots T_m)
\]
for $m\ge 0$.
Recall that for a simplicial set $S_\bullet$ and a simplicial commutative ring $A_\bullet$
the set of morphisms of simplicial sets $\Hom(S_\bullet, A_\bullet)$ forms a commutative ring itself.

\begin{lem}\label{lem:simplring3}
  For any $m\ge 1$ the canonical map
  \begin{equation}\label{eq:lemisosi}
    R\partial\Delta_m^\pi \xrightarrow{\sim} \Hom(\partial\Delta[m] , R\Delta_\bullet^\pi)
    \times_{ \Hom(\partial\Delta[m] , \overline R)}  \Hom(\Delta[m] , \overline R)
  \end{equation}
  is an isomorphism. The canonical map $ R\partial\Delta_0^\pi \xrightarrow{\sim}
  \overline R$ is an isomorphism.
\end{lem}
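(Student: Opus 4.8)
The plan is to write down an explicit ring homomorphism from $R\partial\Delta_m^\pi$ to the fibre product in \eqref{eq:lemisosi} and to verify surjectivity and injectivity separately, the degenerate case $m=0$ being handled by hand. For the construction, use $R\Delta_m^\pi=\Hom(\Delta[m],R\Delta_\bullet^\pi)$ (Yoneda): restriction along $\partial\Delta[m]\hookrightarrow\Delta[m]$ and the reduction $R\Delta_\bullet^\pi\to\overline R$ of Lemma~\ref{lem:simplring2} give a ring homomorphism $R\Delta_m^\pi\to\Hom(\partial\Delta[m],R\Delta_\bullet^\pi)\times_{\Hom(\partial\Delta[m],\overline R)}\Hom(\Delta[m],\overline R)$, namely $f\mapsto\bigl((d_0f,\dots,d_mf),\bar f\bigr)$. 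This kills $T_0\cdots T_m$, since $d_i(T_0\cdots T_m)=\prod_j d_i(T_j)=0$ (as $d_i(T_i)=0$) and $T_0\cdots T_m\mapsto 0$ in $\overline R$, so it descends to the canonical map of \eqref{eq:lemisosi}. Surjectivity is then immediate from Lemma~\ref{lem:simplring2}: an element of the fibre product is precisely a commutative square posing a lifting problem for $R\Delta_\bullet^\pi\to\overline R$ against the monomorphism $\partial\Delta[m]\hookrightarrow\Delta[m]$, a preimage is precisely a solution, and a solution exists because $R\Delta_\bullet^\pi\to\overline R$ is a trivial Kan fibration.

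For injectivity it suffices to show that the kernel of $R\Delta_m^\pi\to\Hom(\partial\Delta[m],R\Delta_\bullet^\pi)\times_{\Hom(\partial\Delta[m],\overline R)}\Hom(\Delta[m],\overline R)$ equals the ideal $(T_0\cdots T_m)$. Because $m\ge 1$, the vanishing of $\bar f$ is forced by $d_0f=0$ (the composite of $d_0$ with $R\Delta_{m-1}^\pi\to\overline R$ is the reduction $R\Delta_m^\pi\to\overline R$), so this kernel is $\bigcap_{i=0}^m\ker(d_i)=\bigcap_{i=0}^m(T_i)$, using that $d_i$ is the quotient map $R\Delta_m^\pi=R\llbracket T_0,\dots,T_m\rrbracket/(\textstyle\sum_jT_j-\pi)\twoheadrightarrow R\llbracket(T_j)_{j\ne i}\rrbracket/(\sum_{j\ne i}T_j-\pi)=R\Delta_{m-1}^\pi$, of kernel $(T_i)$. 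Thus everything comes down to the commutative-algebra identity $\bigcap_{i=0}^m(T_i)=(T_0\cdots T_m)$ in $R\Delta_m^\pi$.

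I would prove this identity from the claim that $T_0,\dots,T_m$ is a regular sequence on $R\Delta_m^\pi$. Since $R\Delta_m^\pi/(T_0,\dots,T_{k-1})\cong R\llbracket T_k,\dots,T_m\rrbracket/(\sum_{j\ge k}T_j-\pi)$, this reduces to showing that $T_k$ is a non-zero-divisor in the latter ring, which follows from the elementary fact that an element of a power series ring over $R$ whose constant term is a non-zero-divisor is itself a non-zero-divisor — applied to $\sum_{j>k}T_j-\pi$ (constant term $-\pi$) after setting $T_k=0$ and cancelling $T_k$ — together with $R\llbracket T_0,\dots,T_m\rrbracket/(\sum_jT_j-\pi,T_0,\dots,T_m)=\overline R\ne 0$. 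As $R\Delta_m^\pi$ is visibly symmetric in $T_0,\dots,T_m$, every permutation of the sequence is again regular, and then a routine induction — using repeatedly that $(x)\cap(y)=(xy)$ whenever $x$ and $y\bmod x$ are non-zero-divisors — gives $\bigcap_i(T_i)=(\prod_iT_i)$. Finally, the case $m=0$ is direct: $R\partial\Delta_0^\pi=R\llbracket T_0\rrbracket/(T_0-\pi,T_0)=R/(\pi)=\overline R$, and the canonical map is this identification.

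I expect this last, commutative-algebra step to be the only real obstacle; the rest is formal, the point being that the fibre product in \eqref{eq:lemisosi} is tailored so that the trivial-Kan-fibration statement of Lemma~\ref{lem:simplring2} delivers surjectivity at once, uniformly in $m$ — in particular for $m=1$, where the fibre product is genuinely needed because $\partial\Delta[1]$ is disconnected.
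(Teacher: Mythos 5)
Your proof is correct and takes essentially the same route as the paper: identify $R\Delta^\pi_m = \Hom(\Delta[m], R\Delta^\pi_\bullet)$, get surjectivity onto the fibre product from the trivial Kan fibration of Lemma~\ref{lem:simplring2}, reduce the kernel computation to $\bigcap_i (T_i) = (T_0\cdots T_m)$, and establish this via regularity of (all permutations of) $T_0,\dots,T_m$ in $R\Delta^\pi_m$. The only cosmetic difference is in how the permutation-regularity is obtained: you observe the manifest symmetry of $R\Delta^\pi_m$ in the $T_i$ and prove regularity of one ordering by hand, whereas the paper first checks that every subsequence of $T_0,\dots,T_m,\sum_i T_i - \pi$ is regular in $R\Delta_m$ and then invokes the packaged Claim~\ref{claim:refseqcl} (equivalence of conditions (1) and (2), and the implication to (3)); your symmetry observation is arguably the slicker way to get the permutations, and your ``routine induction'' for $\bigcap_i(T_i)=(\prod_i T_i)$ is exactly the content of Claim~\ref{claim:refseqcl}(1)$\Rightarrow$(3).
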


\begin{proof}
The second assertion is immediate.
For the first one, note that we have a canonical isomorphism $R\Delta^{\pi}_{m} =
\Hom(\Delta[m], R\Delta^{\pi}_{\bullet})$. As $R\Delta^{\pi}_{\bullet} \to \overline R$ is
a trivial Kan fibration by Lemma \ref{lem:simplring2}, the restriction map
\[
  R\Delta^{\pi}_{m} = \Hom(\Delta[m], R\Delta^{\pi}_{\bullet}) \to \Hom(\partial\Delta[m],
  R\Delta^{\pi}_{\bullet}) \times_{ \Hom(\partial\Delta[m] , \overline R)}  \Hom(\Delta[m] , \overline R)
\]
is surjective for $m\geq 1$. An element $f \in R\Delta^{\pi}_{m}$ lies in its kernel if and only if $\partial_{i}(f) = 0$ for $i=0, \dots, m$.

Observe that, as $\pi$ is a non-zero divisor in $R$, so is $\sum_{i} T_{i} -\pi$ in
$R\Delta_{m}$. Using this, one sees that any subsequence of $T_{0}, \dots, T_{m}, \sum_{i}
T_{i} -\pi$ is a regular sequence in $R\Delta_{m}$. By Claim~\ref{claim:refseqcl} below this
implies that any permutation of $T_{0}, \dots, T_{m}$ forms a regular sequence in $R\Delta^{\pi}_{m}$.

For $f \in R\Delta^{\pi}_{m}$ we get that  $\partial_{i}(f) =
0$ if and only if $f$ is divisible by $T_{i}$.  By Claim~\ref{claim:refseqcl}  this finally implies 
that $\partial_{i}(f)$ vanishes for all $i=0, \dots, m$ if and only if $f$ is divisible by the product $T_0 \cdots T_m$. 
This finishes the proof of the lemma.
\end{proof}

\begin{claim}\label{claim:refseqcl}
Let $A$ be a ring and $\mathbf a = (a_1,\ldots , a_r)\in A^r$ be sequence of elements which do not
generate the unit ideal. Then we have the implications $(1)\Leftrightarrow (2) \Rightarrow
(3)$ among the  properties:
\begin{itemize}
\item[(1)] any permutation of $\mathbf a$ is a regular sequence, 
\item[(2)] any subsequence of $\mathbf a$ is a regular sequence,
  \item[(3)] any element of $A$ divisible by $a_1,\ldots , a_r$ is divisible by $a_1
    \cdots a_r$.
\end{itemize}
\end{claim}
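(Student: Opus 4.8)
The plan is to prove the chain of implications $(1)\Leftrightarrow(2)\Rightarrow(3)$ by reducing everything to the case of a single regular element via induction, exploiting the fact that for a regular sequence the quotient by the first element inherits regularity of the images of the remaining elements.

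\textbf{The equivalence $(1)\Leftrightarrow(2)$.} The implication $(1)\Rightarrow(2)$ is essentially a definitional matter: given a subsequence $a_{i_1},\ldots,a_{i_s}$, extend it to a permutation of $\mathbf a$ by appending the remaining elements in any order; since this permutation is a regular sequence, so is its initial segment $a_{i_1},\ldots,a_{i_s}$ (an initial segment of a regular sequence is a regular sequence). For $(2)\Rightarrow(1)$, note that a permutation of $\mathbf a$ is in particular a subsequence of $\mathbf a$ (of full length, with the elements reordered) --- but here one must be slightly careful, since ``subsequence'' as I used it above preserves order. So instead I would argue as follows: if every subsequence of $\mathbf a$ is regular, I want to show every permutation is regular. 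It suffices to handle a transposition of adjacent entries, say $a_i, a_{i+1}$, and then iterate; and for adjacent transpositions one reduces, by passing to the quotient $A/(a_1,\ldots,a_{i-1})$, to the two-element statement: if $a, b, $ and also $a$ alone, $b$ alone, are regular sequences (equivalently, $a,b$ is a regular sequence and $b,a$ is too after checking the relevant quotients), then swapping is fine. Concretely, the standard fact is: if $a, b$ is a regular sequence in a Noetherian-free setting this is automatic, but in general one needs that $b$ is also a non-zero-divisor modulo nothing and the Koszul-type symmetry. Here hypothesis $(2)$ gives us exactly that $b, a$ (as a subsequence in the other order --- wait, it is not a subsequence in the order-preserving sense).

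Let me restructure: I will take ``subsequence'' in $(2)$ to mean any sequence obtained by deleting some entries, keeping the original order, and ``permutation'' in $(1)$ to mean any reordering. Then $(1)\Rightarrow(2)$ as above. For $(2)\Rightarrow(1)$: I claim $(2)$ already implies the ``exchange'' property. Given a permutation, build it up by adjacent transpositions; at each stage I need that if $x_1,\ldots,x_k,y,z,\ldots$ is regular then so is $x_1,\ldots,x_k,z,y,\ldots$. Modding out by $(x_1,\ldots,x_k)$ reduces to: $y,z,w_1,\ldots$ regular $\Rightarrow$ $z,y,w_1,\ldots$ regular. By hypothesis $(2)$ applied to the original ordering of $\mathbf a$, the relevant subsequences tell us the images of $y$ and $z$ are non-zero-divisors in the appropriate quotients; the key input is that if $(y,z)$ is a regular sequence on a module (here on $A/(x_1,\ldots,x_k)$) and $z$ is a non-zero-divisor on it (which follows since the one-element subsequence $z$ --- at the right spot --- is regular, using $(2)$), then $(z,y)$ is also regular. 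This last step is the classical fact proved by a diagram chase with the relation $zy' = yz'$; I expect this to be the technical heart and the main obstacle, since one must bookkeep which quotients the subsequence hypothesis is being applied in. The hypothesis that $\mathbf a$ does not generate the unit ideal is used to ensure the quotients are nonzero so that ``regular sequence'' behaves as expected (and so that, e.g., a single element being a non-zero-divisor is a genuine condition).

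\textbf{The implication $(2)\Rightarrow(3)$.} This I would prove by induction on $r$. For $r=1$ it is trivial. For the inductive step, suppose $c \in A$ is divisible by each of $a_1,\ldots,a_r$; write $c = a_r c'$. I want $c' $ divisible by $a_1\cdots a_{r-1}$, for then $c = a_r c'$ is divisible by $a_1\cdots a_r$ using that $a_r$ together with $a_1\cdots a_{r-1}$ --- hmm, more carefully: once $c' = a_1\cdots a_{r-1} d$ we get $c = a_1 \cdots a_{r-1} a_r d$, done. So it remains to see $c'$ is divisible by each $a_i$ for $i<r$, and then apply the inductive hypothesis to the sequence $a_1,\ldots,a_{r-1}$ (which still satisfies $(2)$, and does not generate the unit ideal). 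Fix $i < r$: we know $a_i \mid c = a_r c'$. Since by $(2)$ the subsequence $a_i, a_r$ --- wait, I need the order $a_i$ then $a_r$, which is order-preserving since $i<r$, good --- is a regular sequence, $a_r$ is a non-zero-divisor modulo $a_i$, i.e. $a_i \mid a_r c'$ forces $a_i \mid c'$. This gives $a_i \mid c'$ for all $i<r$, completing the induction. I expect this half to be routine; the only subtlety is keeping the orderings consistent with the order-preserving notion of subsequence, which works out because we always peel off the last element $a_r$ and the index $i$ we test is smaller.
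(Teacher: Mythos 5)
Since the paper itself gives essentially no proof --- it cites \cite[Lemma 07DW]{stacks-project} for $(1)\Leftrightarrow(2)$ and labels $(1)\Rightarrow(3)$ an easy exercise --- your attempt is the only place where details are written out. Your $(1)\Rightarrow(2)$ (extend the subsequence to a permutation; an initial segment of a regular sequence is regular) is fine, and your $(2)\Rightarrow(3)$ induction is clean and complete: peel off $a_r$, use that $(a_i,a_r)$ is a regular subsequence for each $i<r$ to cancel $a_r$ from $a_i\mid a_r c'$, and apply the inductive hypothesis to $a_1,\ldots,a_{r-1}$ (which still does not generate the unit ideal). Proving $(2)\Rightarrow(3)$ directly rather than via $(1)$ is a perfectly legitimate, and arguably tidier, alternative to the route the paper implies.

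The genuine gap is in $(2)\Rightarrow(1)$, and you half-acknowledge it yourself. You want to reach an arbitrary permutation by adjacent transpositions and, at each swap of $y,z$ sitting after $x_1,\ldots,x_{i-1}$, invoke the classical fact that a regular pair $(y,z)$ on $M=A/(x_1,\ldots,x_{i-1})$ can be flipped provided $z$ is a non-zero-divisor on $M$. But that non-zero-divisor condition amounts to $x_1,\ldots,x_{i-1},z$ being a regular sequence, and after a few transpositions the elements $x_1,\ldots,x_{i-1}$ occupy an arbitrary subset of $\mathbf a$ in an arbitrary order, with $z$ possibly falling ``in the middle'' of them in the original ordering --- so hypothesis $(2)$, which only speaks about order-preserving subsequences of the original $\mathbf a$, does not certify it. Your parenthetical that ``the one-element subsequence $z$ is regular'' only gives that $z$ is a non-zero-divisor on $A$, not on $M$, so it does not close the gap. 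The standard fix (essentially the Stacks Project argument the paper cites) is to induct on $r$: given a permutation $\sigma$ with $j=\sigma(1)$, first move $a_j$ to the front of any order-preserving subsequence containing it, one adjacent step at a time; at each such step the elements to the left of $a_j$ form a prefix of that subsequence, so the needed ``$\ldots,a_j$ is regular'' condition really is an order-preserving subsequence of $\mathbf a$ and $(2)$ applies. This shows the images of the remaining $a_i$ again satisfy $(2)$ in $A/(a_j)$, and the inductive hypothesis finishes the argument. As written, your transposition scheme does not control which quotients $(2)$ is being applied in, which is precisely the bookkeeping you flagged as ``the main obstacle'' but did not carry out.
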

For the equivalence of (1) and (2) in the claim see \cite[Lemma 07DW]{stacks-project}. It is an easy exercise to check that (1) implies (3).

\section{Lifting smooth algebras}\label{sec:liftsmo}

Let the notation be as in Section~\ref{sec:simplicial} and assume additionally
that $R$ is $\pi$-adically complete. Let $\SmAlg_{\overline R}$ be the category
of  smooth $\overline R$-algebras and let $\FSmAlg_{ R}$ be the category of
$\pi$-adically complete topological $R$-algebras of finite type which are formally smooth over
$R$.

We consider the simplicial enrichment $\FSmAlg^\Delta_{ R}$ of
the category $\FSmAlg_{ R}$ by defining the mapping simplicial sets by
\[
\Hom^\Delta_R(A, B) := \Hom_R(A,B\Delta^\pi_\bullet) = \Hom_{R\Delta^\pi_\bullet}
(A\Delta^\pi_\bullet , B \Delta^\pi_\bullet)
\]
for $A,B\in \FSmAlg_{ R}$. Then we obtain simplicially enriched functors
\[
\FSmAlg_{ R}  \to \FSmAlg^\Delta_{ R} \xrightarrow{\bc} \SmAlg_{ \overline R}
\]
where the categories on the left and on the right are endowed with the trivial simplicial enrichment, the first functor is the inclusion of the underlying category of zero simplices, and $\bc$ is the base change functor given by $A \mapsto \overline A = A/\pi$ on objects and the obvious map on mapping simplicial sets.

\begin{prop}\label{prop:BCequiv}
The functor $\bc$ above is an equivalence of simplicial categories. Moreover, every mapping simplicial set in $\FSmAlg_{R}^{\Delta}$ is Kan. 
\end{prop}
We remark that the second assertion means that the simplicial category $\FSmAlg_{R}^{\Delta}$ is fibrant in the Bergner model structure on simplicial categories~\cite{Bergner}, though we will not make use of this model structure.

\begin{proof}
The essential surjectivity follows from the fact that the obstruction to the deformation
of a smooth algebra vanishes. In order to prove the proposition we will show that for
$A,B\in \FSmAlg_{ R}$ the map of simplicial sets
\[
 S_\bullet:= \Hom_R(A,B\Delta^\pi_\bullet) \to \Hom_{\overline R}(\overline A,\overline B) =: T_\bullet
\]
is a trivial Kan fibration.
This means we have to show that the canonical map
\[
S_m \to \Hom(\partial \Delta[m], S_\bullet ) \times_{\Hom(\partial \Delta[m], T_\bullet )} T_m
\]
is surjective for all $m\ge 0$. For $m\ge 1$ Lemma~\ref{lem:simplring3} tells us that this
map is isomorphic to the map
\[
 \Hom_R(A,B\Delta^\pi_m)  \to  \Hom_R(A,B\partial\Delta^\pi_m) .
\]
This map is surjective as $ B\Delta^\pi_m\to  B\partial \Delta^\pi_m$ is a
surjection whose domain is complete with respect to the kernel ideal generated by $T_0\cdots T_m$
and as $A$ is formally smooth over $R$.

For $m=0$, we have to check that $S_{0} = \Hom_{R}(A,B) \to T_{0} \cong \Hom_{R}(A, \overline B)$ is surjective. This follows from the formal smoothness of $A$ over $R$ and the fact that $B$ is $\pi$-adically complete.
\end{proof}

\begin{rmk} 
	\label{rmk:not-loc}
We remark that the base change functor $\FSmAlg_{R} \to \SmAlg_{\overline R}$ is not a localisation. Indeed, 
assume that $A \to B$ is a map in $\FSmAlg_{R}$ such that its base change $\overline A \to \overline B$ is an isomorphism. As $A$ and $B$ are $\pi$-torsion free and $\pi$-adically complete, it follows inductively that $A/\pi^{n}A \to B/\pi^{n}B$ is an isomorphism for all $n$ and hence so is $A \to B$. In other words, the above functor is conservative, i.e.~it does not invert any non-isomorphism. So if it was a localisation, it would be an equivalence, which is clearly not the case. We don't expect  the induced functor on presheaves of spaces $\PSh(\FSmAlg_{R}) \to \PSh(\SmAlg_{\overline R})$ to be a localisation either.
\end{rmk}

\section{Simplicially enriched formal schemes}

Let $R$ be as in Section~\ref{sec:liftsmo}. 
Let $\Sm_{\overline R}$ be the category of smooth $\overline R$-schemes and let $\SmAff_{\overline R}$ be the subcategory of smooth, affine schemes. 
Let $\FSmAff_{R} = \FSmAlg_{R}^{\op}$ be 
the category of affine, $\pi$-adic formal schemes over $R$ which are topologically of finite
type and formally smooth. The latter inherits a simplicial enrichment from $\FSmAlg_{R}^{\Delta}$, which we denote by $\FSmAff^{\Delta}_{R}$. 
We write $\Nerve(-)$ for the (simplicial) nerve functor (see~\cite[Def.~1.1.5.5]{HTT}).
As $\FSmAff^{\Delta}_{R}$  is fibrant by Proposition~\ref{prop:BCequiv}, its nerve  is an $\infty$-category \cite[Prop.~1.1.5.10]{HTT}. From Proposition~\ref{prop:BCequiv} we obtain a canonical equivalence 
\[
\Nerve(\FSmAff^{\Delta}_{R}) \xrightarrow{\sim} \SmAff_{\overline R}
\]
Using this equivalence, we equip the left-hand side with the Grothendieck topology corresponding to the Zariski or étale topology on $\SmAff_{\overline R}$. In the following, $\Sh(-)$ denotes the $\infty$-category of sheaves of spaces. 

\begin{prop}
	\label{prop:equiv-sheaves}
There are canonical equivalences
\[
\Sh(\Sm_{\overline R})  \xrightarrow{\sim} \Sh(\SmAff_{\overline R}) \xrightarrow{\sim} \Sh(\Nerve(\FSmAff^{\Delta}_{R})).
\]
\end{prop}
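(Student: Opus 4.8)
The plan is to establish the two equivalences separately, both of which are essentially formal once one identifies the right comparison of sites.

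For the first equivalence $\Sh(\Sm_{\overline R}) \xrightarrow{\sim} \Sh(\SmAff_{\overline R})$, I would invoke the standard fact that restricting along a dense subcategory of a site induces an equivalence on sheaf topoi (the Comparison Lemma, cf.\ \cite[Tag 039Z]{stacks-project} or \cite[Lemma~C.3]{HTT}-style statements). Here the inclusion $\SmAff_{\overline R} \hookrightarrow \Sm_{\overline R}$ is the inclusion of the affines; since every smooth $\overline R$-scheme admits a Zariski (hence étale) covering by smooth affine schemes, and the topology on $\SmAff_{\overline R}$ is the one induced from $\Sm_{\overline R}$, the hypotheses of the Comparison Lemma are met. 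I would spell out that the covering sieves generated by affine opens are cofinal, so that restriction and right Kan extension are mutually inverse equivalences on sheaves of spaces.

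For the second equivalence $\Sh(\SmAff_{\overline R}) \xrightarrow{\sim} \Sh(\Nerve(\FSmAff^\Delta_R))$, the key input is Proposition~\ref{prop:BCequiv}: the base change functor $\bc$ is an equivalence of simplicial categories, hence $\Nerve(\bc)\colon \Nerve(\FSmAff^\Delta_R) \xrightarrow{\sim} \SmAff_{\overline R}$ is an equivalence of $\infty$-categories. Moreover, the Grothendieck topology on the left-hand side was \emph{defined} to be the one transported along this equivalence. An equivalence of $\infty$-categories carrying one topology to the other induces an equivalence of the associated $\infty$-topoi of sheaves; this is immediate from the fact that sheafification is characterized by a universal property depending only on the site up to equivalence. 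So this step is essentially a tautology, modulo checking that ``topology transported along an equivalence'' behaves as expected, i.e.\ that a sieve is covering on one side iff its image is covering on the other, which holds by construction.

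The main obstacle, to the extent there is one, is purely bookkeeping: making sure that the topology referred to in the statement of Proposition~\ref{prop:equiv-sheaves} on $\SmAff_{\overline R}$ is literally the restriction of the one on $\Sm_{\overline R}$ (so that the Comparison Lemma applies verbatim) and simultaneously the source of the transported topology on $\Nerve(\FSmAff^\Delta_R)$ (so that the second step is tautological). Once these identifications are in place, both equivalences are formal consequences of standard topos-theoretic facts, and the composite gives the desired chain. I would therefore structure the proof as: (i) cite the Comparison Lemma for the affine inclusion; (ii) observe that an equivalence of sites induces an equivalence of sheaf $\infty$-topoi and apply it to $\Nerve(\bc)$; (iii) compose.
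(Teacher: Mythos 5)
Your proposal is correct and follows essentially the same two-step route as the paper's own proof: the second equivalence is tautological because the topology on $\Nerve(\FSmAff^\Delta_R)$ is defined by transport along the equivalence $\Nerve(\bc)$ from Proposition~\ref{prop:BCequiv}, and the first equivalence holds because the affines form a basis for the Zariski/étale topology on $\Sm_{\overline R}$ (the Comparison Lemma). You simply spell out in more detail what the paper compresses into two sentences.
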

\begin{proof}
The functors are induced by pullback along $\Nerve(\FSmAff^{\Delta}_{R}) \xrightarrow{\sim} \SmAff_{\overline R} \to \Sm_{\overline R}$. Clearly, the second functor in the statement is an equivalence. As the Zariski/étale topology on $\Sm_{\overline R}$ has a basis consisting of affines, the first functor is an equivalence, too.
\end{proof}

\begin{rmk}
Proposition~\ref{prop:equiv-sheaves} is an analog of the constrution of the ``motivic Monsky–Washnitzer functor''
$\mathrm{DA}(k) \to \mathrm{RigDA}(K)$ due to Ayoub~\cite[Cor.~1.4.24, Rem.~1.4.25]{AyoubMotifsRig} and Vezzani~\cite{VezzaniMW}, where $K$ is a complete discretely valued field with
residue field $k$.
\end{rmk}

\section{PD-de Rham cohomology}
	\label{sec:pd-de-rham-cohomology}
	
Fix a prime number $p$.
Let $R$ be a ring, $\pi \in R$ an element, and assume that the ideal $(\pi)$ is endowed with a
PD-structure. Assume further that $p$ is nilpotent in $\overline R = R/(\pi)$ and that $R$ is $\pi$-adically complete.

\begin{defn}
We denote by $\PFSmAlg_R$ the following category. Its objects are pairs $(B,I)$ consisting of an $R$-algebra $B$ and a finitely generated ideal $I \subseteq B$, called ideal of definition, such that $B$ is $I$-adically complete and formally smooth over $R$, $I$ contains the image of $\pi$, and  $B/I$ is smooth over $\overline R$. The morphisms $\phi\colon (B,I)\to (B',I')$ are the $R$-algebra morphisms $\phi\colon B \to B'$
with $\phi(I)\subset I'$.
\end{defn}

%Let $B$ be an $I$-adically complete $R$-algebra, where the ideal of definition $I$ is
%finitely generated and contains the image of $\pi$. We also assume that $B$ is formally
%smooth over $R$ and that $B/I$ is smooth over $\overline R$.
%We denote the category of those $R$-algebras together with such an ideal $(B,I)$ by
%$\PFSmAlg_R$. The morphisms $\phi\colon (B,I)\to (B',I')$ are the $R$-algebra morphisms
%with $\phi(I)\subset I'$.
Note that the category $\PFSmAlg_R$ has finite coproducts.

\begin{ex}\label{ex:smform}
Our basic example of an object in $\PFSmAlg_{R}$ is  
\begin{equation*}\label{eq:formsqgood}
(B, I ) = ( A\llbracket T_0 , \ldots , T_m\rrbracket, (\pi, T_{0}, \ldots, T_{m}) )
\end{equation*}
where $A$ is a $\pi$-adic complete $R$-algebra which is formally smooth and topologically of finite type over $R$.
\end{ex}

Let $ \mathrm{Ch}(R) $ be the $1$-category of complexes of $R$-modules and $
\mathrm{Ch}(R)^\wedge $ the subcategory of derived $\pi$-complete complexes of $R$-modules.

Consider the PD-de Rham complex as the functor $\dR\colon \PFSmAlg_R \to \mathrm{Ch}(R)^\wedge
$ defined as the
(derived) limit\footnote{As the transition maps are surjective, the naive limit in fact agrees with the derived limit.}
\[
\dR( B,I ) = \Omega^{*,\rm PD}_{(B,I)/(R,\pi)}  = \lim_n \Omega^{*}_{\mathrm{D}_{I}(B\otimes_R R_n) / R_n}
\]
where $R_n=R/(\pi^n)$ and where we take the PD-envelope of $I$ relative to the
PD-structure of $(\pi)$. The differential forms are the ones compatible with the
PD-structure \cite[Sec.~07HQ]{stacks-project}. If the ideal of definition $ I$ is clear from the context we
simply write $\dR(B)$ for $\dR(B,I)$. 

%From now on we assume that $\pi$ is a non-zero divisor.
\begin{lemma}
	\label{lem:bc-PD-dR}
Assume that $\pi$ is a non-zero divisor in $R$.
All modules in the complex $\dR(B,I)$ are $\pi$-torsion free and there is an isomorphism of complexes $\dR(B,I) \otimes_{R} \overline R \cong \Omega^{*}_{\D_{I}(B \otimes_{R} \overline R)/\overline R}$.
\end{lemma}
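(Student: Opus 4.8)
The plan is to reduce the statement to the local structure of divided power envelopes of smooth closed immersions over a PD-base. To begin, since $B$ is formally smooth and topologically of finite type over $R$ it is $R$-flat, and since $\pi$ is a non-zero divisor in $R$ it is one in $B$; hence $B\otimes_R R_n = B/\pi^n B$ and $\dR(B,I)=\lim_n \Omega^*_{D_n/R_n}$ with $D_n:=\D_I(B/\pi^n B)$ the divided power envelope of $I$ compatible with the PD-structure on $(\pi)\subset R_n$ (relative to which $\pi$ already carries divided powers).

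The key claim I would prove is that for every $n\ge 1$ one has: (i) $D_n$ is flat over $R_n$; (ii) the canonical surjection $D_{n+1}/\pi^n D_{n+1}\to D_n$ is an isomorphism; and (iii) there is an isomorphism of complexes $\Omega^*_{D_n/R_n}\cong D_n\otimes_B\Omega^*_{B/R}$, natural in $n$, where $\Omega^*_{B/R}$ denotes the continuous de Rham complex of $B$ over $R$ — a complex of finite projective $B$-modules. All three assertions are local on $\Spf B$ along $V(I)$, since both divided power envelopes and (PD-)differentials commute with the standard localizations. Localizing, the smoothness of $B/I$ over $\overline R$ together with the formal smoothness of $B$ over $R$ lets me arrange that $I$ is generated by $\pi$ together with a regular sequence $t_1,\dots,t_r$ that is part of a system of coordinates of $B$ over $R$; then $D_n$ is identified with the suitably completed divided power polynomial algebra $(B/\pi^n B)\langle t_1,\dots,t_r\rangle$, which is free over $B/\pi^n B$ on the divided power monomials in the $t_i$. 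Freeness over $B/\pi^n B$ gives (i) and (ii) at once, and the standard computation of PD-differentials — in which the divided power variables $t_i$ contribute no differentials over the base beyond $dt_i$ — gives $\Omega^1_{D_n/R_n}=D_n\otimes_B\Omega^1_{B/R}$, hence (iii) after passing to exterior powers. For the structure theory of divided power envelopes and their de Rham complexes I would appeal to Berthelot--Ogus and to \cite[Tag~07HQ]{stacks-project}.

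Granting the claim, (i) and (ii) imply by the usual argument that $D:=\lim_n D_n$ is $\pi$-adically complete and $\pi$-torsion free, with $D/\pi^n D\xrightarrow{\sim}D_n$. By (iii), the degree-$j$ term of $\dR(B,I)$ is $\lim_n(D_n\otimes_B\Omega^j_{B/R})$, and since $\Omega^j_{B/R}$ is finite projective over $B$ this equals $D\otimes_B\Omega^j_{B/R}$, a finite projective $D$-module; as $D$ is $\pi$-torsion free, so is every term of $\dR(B,I)$. This proves the first assertion. For the second, I reduce modulo $\pi$: using $D/\pi D=D_1$ and the $D$-flatness of the terms one obtains $\dR(B,I)\otimes_R\overline R=D_1\otimes_B\Omega^*_{B/R}$, and applying (iii) with $n=1$ — that is, over $\overline R$, where $\pi=0$ — identifies the right-hand side with $\Omega^*_{\D_I(B\otimes_R\overline R)/\overline R}$, since $D_1=\D_I(B\otimes_R\overline R)$.

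The main obstacle, and the only step that is not purely formal, is the claim — specifically the local identification of $D_n$ with a completed divided power polynomial algebra and the ensuing formula for its PD-de Rham complex. This amounts to combining the structure theory of divided power envelopes of regular (equivalently, here, smooth) closed immersions over a PD-base with some care about working in the $I$-adically complete topological setting: one must check that the relevant completions preserve freeness and flatness, and that the formation of $D_n$ is compatible with the non-flat base change $R_n\to R_{n-1}$ of divided power rings. Once the claim is established, what remains is the assembly of inverse limits of surjective systems.
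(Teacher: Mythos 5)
Your overall strategy coincides with the paper's: reduce to each finite level $n$, pass to a Zariski cover to obtain a local structure theorem for the PD envelope $D_n=\D_I(B\otimes_R R_n)$, deduce flatness over $R_n$ and termwise base change, and then assemble the inverse limit. Your claims (i)--(iii) are exactly what one needs, and the endgame (steps 5 and 6) is fine. However, there is a genuine error in how you derive them: you assert that locally $D_n$ ``is free over $B/\pi^n B$ on the divided power monomials in the $t_i$.'' This is false. The divided power envelope is supported on the closed subscheme $V(I)$; concretely, writing $\overline C_n=(B/I)\otimes_R R_n$ and choosing coordinates so that $B_n\cong C_n\llbracket T_1,\dots,T_r\rrbracket$ for a smooth lift $C_n$ of $\overline C_n$, one has $D_n\cong C_n\langle T_1,\dots,T_r\rangle$, which is free over $C_n$ but \emph{not} over $B_n$: already for $B=R\llbracket T\rrbracket$, $r=1$, $n=1$, one has $D_1=\overline R\langle T\rangle$ with $T^p=p!\,T^{[p]}=0$, so $1\in D_1$ is $T^p$-torsion while $T$ is a non-zero divisor in $B_1=\overline R\llbracket T\rrbracket$. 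So $D_1$ cannot be $B_1$-free, and the claimed $B_n$-basis does not exist.

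This matters because your justification of (i) and (ii) rests precisely on that freeness. The correct repair is what the paper does: lift $B/I$ to a $\pi$-adically complete smooth $R$-algebra $C$, lift $B\to B/I$ to $B\to C$ using formal smoothness, and use \cite[Cor.~$0_{\rm IV}$(19.5.4)]{EGA} to identify $B_n$ with a (completed) symmetric algebra on the projective conormal module over $C_n$; after Zariski localization this gives $B_n\cong C_n\llbracket T_1,\dots,T_r\rrbracket$ and $D_n\cong C_n\langle T_1,\dots,T_r\rangle$. Flatness of $D_n$ over $R_n$ then follows from freeness over $C_n$ (and smoothness of $C_n/R_n$), and your (ii) follows from $C_{n+1}/\pi^n\cong C_n$, not from $B_n$-freeness. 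Your (iii) remains correct as stated, being a formal consequence of the structure of PD differentials. One further small imprecision: the sequence you want to be part of a coordinate system is not ``$\pi$ together with $t_1,\dots,t_r$''; only the $t_i$ are coordinates ($\pi$ lives in $R$), and the existence of such $t_i$ is exactly what the EGA citation supplies. Finally, your limit argument differs slightly from the paper's (you prove $D_{n+1}/\pi^nD_{n+1}\xrightarrow{\sim}D_n$ directly, whereas the paper argues via the vanishing of the pro-module $\{M_n[\pi]\}$ using $R_n$-flatness of each term); both are valid once the local structure is corrected, and yours is arguably the more transparent of the two.
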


\begin{proof}
Write $B_{n} = B \otimes_{R} R_{n}$, $\overline B = B_{1} = B/(\pi)$.
We first fix $n$ and $i$ and show that $\Omega^{i}_{\D_{I}(B_{n})/R_{n}}$ is a flat $R_{n}$-module which satisfies 
\[
\Omega^{i}_{\D_{I}(B_{n})/R_{n}} \otimes_{R_{n}} \overline R = \Omega^{i}_{\D_{I}(\overline B)/\overline R}.
\]
Lift $\overline C = B/I$ to a $\pi$-adically complete smooth $R$-algebra $C$, and let $C_{n} = C/(\pi^{n})$.
By the formal smoothness of $B$ we can lift the surjection $B \to B/I=\overline C$ to a
surjection $B \to C$. We get induced surjections $B_{n} \to C_{n}$ whose kernel we denote
by $J_{n}$. Note that $J_{1} = I\overline B$ is finitely generated by assumption. As $\pi$
is nilpotent in $B_{n}$ and $J_{n-1}\cong J_n /\pi^{n-1} J_n$ it follows that $J_{n}$ is finitely generated, too. 
As $B_{n}$ and $C_{n}$ are formally smooth over $R_{n}$, \cite[Cor.~$0_{\rm{IV}}$(19.5.4)]{EGA} implies that $J_{n}/J_{n}^{2}$ is a projective $C_{n}$-module and that there is an isomorphism $\Sym_{C_{n}}(J_{n}/J_{n}^{2})^{\wedge} \cong B_{n}$, where  ${}^{\wedge}$ indicates the completion for the augmentation ideal. By Zariski descent (noting that the divided power envelope commutes with localisation and is insensitive to the $I$-adic completion) we may  assume that $J_{n}/J_{n}^{2}$ is free, and hence $B_{n}$ is isomorphic to the power series ring $C_{n} \llbracket T_{1}, \dots, T_{r}\rrbracket$. It then follows from \cite[Rem.~3.20 1), 6)]{BerthelotOgus} that $\D_{I}(B_{n}) = \D_{J_{n}}(B_{n})$ and that one can drop the compatibility condition with the PD structure on $(\pi)$, and hence the PD envelope  is isomorphic to the PD polynomial algebra $C_{n}\langle T_{1}, \dots, T_{n} \rangle$. In particular, it is flat over $C_{n}$ and hence over $R_{n}$, and $\D_{I}(B_{n}) \otimes_{R_{n}} \overline R = \D_{I}(\overline B)/\overline R$. As $B_{n}$ is formally smooth over $R_{n}$ (and $p$ is nilpotent in $R_{n}$), $\Omega^{i}_{B_{n}/R_{n}}$ is a projective $B_{n}$-module. Thus 
\[
\Omega^{i}_{\D_{I}(B_{n})/R_{n}} \cong \Omega^{i}_{B_{n}/R_{n}} \otimes_{B_{n}} \D_{I}(B_{n})
\]
is flat over $R_{n}$ and obviously satisfies the base change formula asserted above.

Write $M_{n} = \Omega^{i}_{\D_{I}(B_{n})/R_{n}}$. As this is a flat $R_{n}$-module and
$R_{n}[\pi] = \pi^{n-1}R_{n}$ (as $\pi$ is a non-zero divisor in $R$), it follows that the
$\pi$-torsion submodule $M_{n}[\pi]$ is $\pi^{n-1}M_{n}$. Hence the pro-module
$\{M_{n}[\pi]\}$ is zero, and hence multiplication by $\pi$ is injective on $\lim_{n}
M_{n}$ with  cokernel $\lim_{n}M_{n}/\pi M_n = M_{1}$.
This is precisely the claim of the lemma.
\end{proof}

Berthelot proved a base change formula, the Poincaré lemma, and étale descent for PD-de Rham cohomology. We recall these in the form we need in the following proposition.

\begin{prop}\label{prop:basicpdder}
Assume that $\pi$ is a non-zero divisor in $R$.
  \begin{enumerate}
  \item The canonical map $\dR(B,I) \otimes_R^{\mathrm L} \overline R \xrightarrow{\sim} \dR(B/I)$ is an
  equivalence for $(B,I)$ in $\PFSmAlg_R$. Here the de Rham cohomology on the right is
  over  $\overline R$ with the element $\overline \pi = 0$. %$(\overline R,0)$.

  \item
    For $(B,I)\to (B', I')$ a morphism in  $\PFSmAlg_R$
  such that $B/I\xrightarrow{} B'/I'$ is an isomorphism the induced map $\dR(B,I) \xrightarrow{\sim} \dR(B',I') $ is an equivalence.

  \item 
   Let  $(B,I) \to (C,K)$  be a map in $\PFSmAlg_{R}$ such that $B \to C$ is faithfully flat and étale, and $K$ is 
generated by the image of $I$. For the associated Cech nerve $B \to C^{\bullet}$ (equipped with the obvious simplicial ideal of definition $K^{\bullet}$)  we get an
  equivalence $\dR(B, I) \xrightarrow{\sim} \lim_\bullet \dR(C^{\bullet}, K^{\bullet})$.
  \end{enumerate}
\end{prop}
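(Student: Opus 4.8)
The three statements are the classical theorems of Berthelot on crystalline/PD-de Rham cohomology, and the plan is to reduce each to the affine computations already prepared in Lemma~\ref{lem:bc-PD-dR} and Example~\ref{ex:smform}, plus the standard references. For part~(1), by Lemma~\ref{lem:bc-PD-dR} the complex $\dR(B,I)$ consists of $\pi$-torsion free modules and $\dR(B,I)\otimes_R\overline R$ is computed termwise and equals $\Omega^*_{\D_I(B\otimes_R\overline R)/\overline R}$; since $\pi$ is a non-zero divisor the derived base change $\dR(B,I)\otimes_R^{\mathrm L}\overline R$ agrees with the underived one, so the claim becomes the identification $\D_I(B\otimes_R\overline R)=\D_{I\overline B}(\overline B)$ together with the fact that over $\overline R$ one may drop the compatibility with the PD-structure on $(\pi)$ (as $\pi=0$ there). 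This is exactly \cite[Rem.~3.20]{BerthelotOgus} again, applied over the base $(\overline R,0)$; after this identification both sides are $\Omega^*_{\D_{\overline B/I}(\overline B)/\overline R}$ by definition.

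For part~(2), one uses that $\D_I$ and hence $\dR$ is a Zariski sheaf on $\Spec(B/I)$ (noted already in the proof of Lemma~\ref{lem:bc-PD-dR}: the PD-envelope commutes with localisation and is insensitive to $I$-adic completion), so by Zariski descent on $B/I=B'/I'$ one may assume $B/I$ is a standard smooth $\overline R$-algebra. Then by the argument of Lemma~\ref{lem:bc-PD-dR}, after lifting $\overline C=B/I$ to a $\pi$-adic smooth $R$-algebra $C$ and choosing compatible surjections, $B\cong C\llbracket T_1,\dots,T_r\rrbracket$ and $B'\cong C\llbracket S_1,\dots,S_s\rrbracket$ with $I,I'$ the respective kernels together with $\pi$, and the map $B\to B'$ is a map of such presentations inducing the identity on $C$. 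The PD-envelopes become PD-polynomial algebras $C_n\langle T_\bullet\rangle$ and $C_n\langle S_\bullet\rangle$, and the Poincaré lemma for the PD-de Rham complex of a PD-polynomial algebra over $C_n$ \cite[Thm.~6.12]{BerthelotOgus} shows both sides map quasi-isomorphically to $\Omega^*_{C_n/R_n}$ compatibly; passing to the limit over $n$ and using the $\pi$-torsion freeness from Lemma~\ref{lem:bc-PD-dR} gives the equivalence of $\pi$-complete complexes. Alternatively and more cleanly, $\dR(B/I,0)$ depends only on $B/I$, and part~(1) identifies $\dR(B,I)\otimes^{\mathrm L}_R\overline R\simeq\dR(B/I)\simeq\dR(B',I')\otimes^{\mathrm L}_R\overline R$; since both $\dR(B,I)$ and $\dR(B',I')$ are derived $\pi$-complete with $\pi$-torsion free terms, a map between them which is an equivalence mod $\pi$ is an equivalence, so it suffices to check the induced map is compatible with the mod-$\pi$ identifications, which is immediate.

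For part~(3), faithfully flat étale descent, I would again invoke the affine PD case. Since $(B,I)\to(C,K)$ is étale with $K$ generated by the image of $I$, base change mod $\pi^n$ gives étale maps $B_n\to C_n$, the PD-envelope commutes with the étale (hence flat) base change $B_n\to C_n$ by \cite[Rem.~3.20]{BerthelotOgus}, so $\D_I(C_n^\bullet)$ is the Čech nerve of the faithfully flat map $\D_I(B_n)\to\D_I(C_n)$, and likewise $\Omega^i_{\D_I(C_n^\bullet)/R_n}=\Omega^i_{\D_I(B_n)/R_n}\otimes_{\D_I(B_n)}\D_I(C_n^\bullet)$. Faithfully flat descent for modules then shows $\Omega^i_{\D_I(B_n)/R_n}\xrightarrow{\sim}\lim_\bullet\Omega^i_{\D_I(C_n^\bullet)/R_n}$, and since this holds termwise in $i$ and compatibly in $n$, it holds for the complexes and after the derived $\pi$-complete limit; finally, as the limit over $\bullet$ commutes with the limit over $n$ and with $\pi$-completion, one gets $\dR(B)\xrightarrow{\sim}\lim_\bullet\dR(C^\bullet)$. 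The main obstacle in all three parts is bookkeeping: ensuring the PD-envelope really commutes with the relevant base changes and localisations (which is where \cite[Rem.~3.20]{BerthelotOgus} and the hypothesis that $K$ is generated by the image of $I$ are essential) and checking that all the limits — the $I$-adic one defining $\D_I$, the $\lim_n$ over $R_n$, the derived $\pi$-completion, and the cosimplicial $\lim_\bullet$ — can be interchanged freely, which is legitimate here precisely because the transition maps are surjective and the terms are $\pi$-torsion free by Lemma~\ref{lem:bc-PD-dR}.
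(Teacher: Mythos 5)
Your argument for part~(1) contains a genuine gap. After applying Lemma~\ref{lem:bc-PD-dR} you correctly arrive at $\dR(B,I)\otimes_{R}^{\mathrm L}\overline R \cong \Omega^{*}_{\D_{I}(\overline B)/\overline R}$, but then you assert that ``both sides are $\Omega^{*}_{\D_{\overline B/I}(\overline B)/\overline R}$ by definition.'' This is false. The right-hand side $\dR(B/I)$ is by definition the PD--de Rham complex of $(B/I, (0))$ over $(\overline R, 0)$, and since the PD-envelope of the zero ideal is the ring itself, $\dR(B/I) = \Omega^{*}_{(B/I)/\overline R}$ is the \emph{ordinary} de Rham complex of the smooth $\overline R$-algebra $B/I$. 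Identifying $\Omega^{*}_{\D_{I}(\overline B)/\overline R}$ with $\Omega^{*}_{(B/I)/\overline R}$ is precisely the content of the PD-Poincar\'e lemma (applied to the local presentation of $\D_{I}(\overline B)$ as a PD-polynomial algebra over $B/I$), and it is not a tautology: it is the heart of part~(1). You invoke exactly this Poincar\'e lemma in your Zariski-descent argument for~(2), so the tool is available to you, but its absence in~(1) leaves a hole that propagates to the ``cleaner'' proofs of~(2) and~(3) that depend on~(1). (Also a minor slip: it is the $\pi$-torsion-freeness of the terms of $\dR(B,I)$ from Lemma~\ref{lem:bc-PD-dR}, not merely $\pi$ being a non-zero-divisor in $R$, that lets you compute $\otimes^{\mathrm L}_{R}\overline R$ underived.)

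Your second argument for~(2) — once~(1) is fixed — coincides with the paper's: both sides are derived $\pi$-complete, so an equivalence mod $\pi$ suffices. For~(3) you take a genuinely different route from the paper: you work level-by-level, using that PD-envelopes commute with the flat base change $B_{n} \to C_{n}$ and faithfully flat descent for the termwise Amitsur complexes, then pass to the limit over $n$ and $\pi$-complete. The paper instead reduces~(3) to~(1) and \'etale descent for the usual de Rham complex over $\overline R$, which is shorter since the reduction to the special fibre has already been done. Your approach is viable, but you should be more careful that what you need is acyclicity of the full augmented cosimplicial complex (not just the equalizer) and that the cosimplicial totalization commutes with $\lim_{n}$ and the derived $\pi$-completion; the surjectivity of the transition maps and the $\pi$-torsion-freeness from Lemma~\ref{lem:bc-PD-dR}, which you mention, do make this work, but it deserves to be spelled out.
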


\begin{proof}
By Lemma~\ref{lem:bc-PD-dR} we have $\dR(B,I)\otimes_{R}^{\mathrm L} \overline R \simeq \Omega^{*}_{\D_{I}(\overline B)/\overline R}$.
We saw in the proof of the lemma that $\D_{I}(\overline B)$ is locally a PD polynomial algebra over $B/I$.
 As in
\cite[Lemma~07LD]{stacks-project}, the elementary Poincaré lemma
\cite[Lemma~V.2.1.2]{BerthelotCris} then implies that the canonical map $\Omega^{*}_{\D_{I}(\overline B)/\overline R} \to \Omega^{*}_{(B/I)/\overline R}$ is a quasi-isomorphism.

(2) As both complexes are derived $\pi$-complete, this follows immediately from (1).

(3) This follows from (1) and the fact that de Rham cohomology has étale descent.
\end{proof}

\section{A new approach to crystalline cohomology}\label{sec:newapcris}

In this section we construct a simplicially enriched version of the PD-de Rham complex functor and use it together with Proposition~\ref{prop:equiv-sheaves} to construct crystalline cohomology (Definition~\ref{def:crys}).

We start with some purely categorical preliminaries. Let $\cC$ be a category. We denote by $s\cC$ be the category of simplicial objects in $\cC$. Then $s\cC$ is canonically simplicially enriched; see \cite[discussion before Prop.~2.1.2]{Quillen67}. For $X_{\bullet}, Y_{\bullet} \in s\cC$, we denote the mapping simplicial set in $s\cC$ by $\Map_{s\cC}(X_{\bullet}, Y_{\bullet})$. Concretely, an $n$-simplex in $\Map_{s\cC}(X_{\bullet}, Y_{\bullet})$ is given by a collection of maps $f(\sigma) \colon X_{q} \to Y_{q}$ for every $q \geq 0$ and $\sigma\in \Delta[n]_{q}$, satisfying an obvious compatibility condition.
If $\cD$ is a second category, any functor $F\colon \cC \to \cD$ induces a simplicially enriched (simplicial, for short) functor $s\cC \to s\cD$.

Now assume that $\cC$ has finite coproducts. Fix a simplicial object $\Delta_{\bullet} \in s\cC$. We define a simplicial enrichment of $\cC$, denoted $\cC^{\Delta}$, via the formula
\[
\Map_{\cC^{\Delta}}(X,Y)_{n} := \Hom_{\cC}(X, Y \sqcup \Delta_{n}) = \Hom_{\cC_{\Delta_{n}/}}(X\sqcup \Delta_{n}, Y \sqcup \Delta_{n}) \qquad ([n]\in \Delta^{\op}).
\]
We denote by $s\cC_{\Delta_{\bullet}/}$ the simplicial slice category consisting of objects of $s\cC$ together with a map from $\Delta_{\bullet}$.

\begin{lemma}
	\label{lem:simplicial-enrichments}
The functor $\cC \to s\cC_{\Delta_{\bullet}/}$, $X \mapsto X \sqcup \Delta_{\bullet}$, extends to a simplicial functor $\cC^{\Delta} \to s\cC_{\Delta_{\bullet}/}$ and this simplicial functor is fully faithful, i.e.~it induces isomorphisms on mapping simplicial sets.
\end{lemma}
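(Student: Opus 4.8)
The plan is to unwind both sides of the claimed isomorphism on mapping simplicial sets and check they agree level by level, exploiting the universal property of the coproduct together with the fact that $\Delta_{\bullet}$ is a fixed simplicial object and the slice is taken under it. First I would describe the simplicial functor $\cC^{\Delta} \to s\cC_{\Delta_{\bullet}/}$ on objects (it is $X \mapsto X \sqcup \Delta_{\bullet}$, with the structural map from $\Delta_{\bullet}$ being the coproduct inclusion) and, more importantly, on mapping simplicial sets: an $n$-simplex of $\Map_{\cC^{\Delta}}(X,Y)$ is by definition an element of $\Hom_{\cC}(X, Y \sqcup \Delta_{n})$, and I must send it functorially to an $n$-simplex of $\Map_{s\cC_{\Delta_{\bullet}/}}(X \sqcup \Delta_{\bullet}, Y \sqcup \Delta_{\bullet})$. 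The key point is that, by the second description of $\Map_{\cC^{\Delta}}(X,Y)_n$ as $\Hom_{\cC_{\Delta_n/}}(X \sqcup \Delta_n, Y \sqcup \Delta_n)$, such a datum is exactly a map under $\Delta_n$ in level $n$; one then defines the required collection of maps $f(\sigma)\colon (X \sqcup \Delta_{\bullet})_q \to (Y \sqcup \Delta_{\bullet})_q$ for $\sigma \in \Delta[n]_q$ by pulling back along $\sigma^{*}\colon \Delta_n \to \Delta_q$ (functoriality of $\Delta_{\bullet}$) and using the universal property of $\sqcup$ to extend from $X \to Y \sqcup \Delta_n \to Y \sqcup \Delta_q$ to a map on the coproduct that respects the $\Delta_q$-summand. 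I would check the simplicial identities for $\Map_{s\cC}$ are satisfied — this is the "obvious compatibility condition" and is a routine diagram chase with the coface/codegeneracy maps of $\Delta_{\bullet}$.

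Next I would establish full faithfulness, i.e.\ that for each $n$ the map
\[
\Hom_{\cC}(X, Y \sqcup \Delta_n) \longrightarrow \Map_{s\cC_{\Delta_{\bullet}/}}(X \sqcup \Delta_{\bullet}, Y \sqcup \Delta_{\bullet})_n
\]
is a bijection. Injectivity is clear: an $n$-simplex on the right records, among its components, the component at $\sigma = \id_{[n]} \in \Delta[n]_n$, which is precisely the original map $X \sqcup \Delta_n \to Y \sqcup \Delta_n$ under $\Delta_n$, hence the original element of $\Hom_{\cC}(X, Y \sqcup \Delta_n)$. For surjectivity, I would argue that an arbitrary $n$-simplex on the right — a compatible collection $\{g(\sigma)\}_{\sigma}$ of maps of simplicial-degree-$q$ pieces commuting with the structure maps from $\Delta_{\bullet}$ — is forced to be of the form constructed above: because each $g(\sigma)$ is a map under $\Delta_q$, its restriction to the $X$-summand lands in $Y \sqcup \Delta_q$, and the compatibility with the simplicial structure maps of the source $X \sqcup \Delta_{\bullet}$ (which on the $X$-summand is the constant simplicial object $X$) forces $g(\sigma)|_X = \Delta[n](\sigma) \circ (g(\id_{[n]})|_X)$ where $\Delta[n](\sigma)$ is induced by $\Delta_n \to \Delta_q$; thus the whole collection is recovered from $g(\id_{[n]})|_X \in \Hom_{\cC}(X, Y \sqcup \Delta_n)$.

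The main obstacle, and the place where I expect to spend real care, is the surjectivity step: one must verify that the compatibility conditions defining an $n$-simplex of $\Map_{s\cC}$ really do pin down all components $g(\sigma)$ from the single component $g(\id_{[n]})$, and that no extra constraints are imposed that would shrink the image. This amounts to checking that the constant simplicial object $X$ (viewed as the $X$-summand of $X \sqcup \Delta_{\bullet}$, which has no new generators in higher degrees) has the property that maps out of it into a simplicial object $Z_{\bullet}$ under $\Delta_{\bullet}$, in the $\Map_{s\cC_{\Delta_{\bullet}/}}$ sense, are detected in a single degree together with functoriality — essentially a Yoneda-type bookkeeping for the cosimplicial diagram $[q] \mapsto \Delta_q$. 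Everything else — the definition of the functor, compatibility with face/degeneracy operators, and that it extends the given functor $\cC \to s\cC_{\Delta_{\bullet}/}$ on underlying categories (the case $n=0$) — is formal manipulation with coproducts and the structure maps of $\Delta_{\bullet}$, and I would present it briskly.
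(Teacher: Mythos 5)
Your proposal is correct and takes essentially the same approach as the paper: the two key observations are the universal property of the coproduct (reducing to maps out of $X$ alone) and the constancy of $X$ (so that an $n$-simplex of $\Map_{s\cC}(X, Y\sqcup\Delta_\bullet)$ is pinned down by its component at $\id_{[n]}$). The paper packages this more slickly as a chain of natural isomorphisms $\Map_{s\cC_{\Delta_{\bullet}/}}(X \sqcup \Delta_{\bullet}, Y\sqcup \Delta_{\bullet})_{n} \cong \Map_{s\cC}(X, Y\sqcup \Delta_{\bullet})_{n} \cong \Hom_{\cC}(X, Y\sqcup \Delta_{n})$, whereas you construct the map and verify bijectivity by hand, but the content is the same.
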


\begin{proof}
Fix $[n]\in \Delta^{\op}$. We have canonical isomorphisms 
\begin{align*}
\Map_{s\cC_{\Delta_{\bullet}/}}(X \sqcup \Delta_{\bullet}, Y\sqcup \Delta_{\bullet})_{n} &\cong 
	\Map_{s\cC}(X, Y\sqcup \Delta_{\bullet})_{n} \\
	&\cong \Hom_{\cC}(X, Y\sqcup \Delta_{n}) \\
	&= \Map_{\cC^{\Delta}}(X,Y)_{n}.
\end{align*}
The first isomorphisms follows from the definition of the slice category and the universal property of the coproduct. For the second, note that, as $X$ is a constant simplicial object, an $n$-simplex $f$ in $\Map_{s\cC}(X, Y\sqcup \Delta_{\bullet})$ is uniquely determined by $f(\id_{[n]}) \colon X \to Y\sqcup \Delta_{n}$. These isomorphisms are functorial in $[n] \in \Delta^{\op}$ and hence give the desired simplicial enrichment.
\end{proof}

We now return to the setting of the previous sections.   Let $R$ be a $\pi$-adically
complete ring where $\pi\in R$ is non zero-divisor, the ideal $(\pi)$ is equipped with a PD-structure, and $p$ is nilpotent in $R/(\pi)$.

Let $\D(R)$ denote the derived $\infty$-category of $R$, and $\D(R)^{\wedge}$ its subcategory formed by the derived $\pi$-complete complexes.

For $A \in \FSmAlg_{R}$, abusing notation slightly, we write $A\Delta^{\pi}_{\bullet}$ for the simplicial object 
\[
(A \widehat\otimes_{R} R\Delta^{\pi}_{\bullet}, (\pi, T_{0}, \dots, T_{\bullet})) \in s\PFSmAlg.
\]
 We will use as the abstract simplicial object $\Delta_\bullet$ above the simplicial
 object $R\Delta^{\pi}_{\bullet}$.

\begin{lemma}\label{lem:drdelta}
There is a functor 
\[
\dR^{\Delta}\colon \Nerve(\FSmAlg_{R}^{\Delta}) \to  \D(R)^{\wedge}
\]
which on objects is given by 
\[
\dR^{\Delta}(A) = \colim_{\Delta^{\op}} \dR(A\Delta^{\pi}_{\bullet}).
\]
\end{lemma}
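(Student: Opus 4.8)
The plan is to package the PD-de Rham complex functor, which so far is only defined on the $1$-category $\PFSmAlg_R$, into a functor out of the $\infty$-category $\Nerve(\FSmAlg_R^\Delta)$, and then postcompose with geometric realization. First I would apply the discussion preceding Lemma~\ref{lem:simplicial-enrichments}: the ordinary functor $\dR\colon \PFSmAlg_R \to \Ch(R)^\wedge$ induces a simplicial functor $s\dR\colon s\PFSmAlg_R \to s\Ch(R)^\wedge$ on simplicial objects. Restricting along the fully faithful simplicial functor $\FSmAlg_R^\Delta \to s\PFSmAlg_{R,R\Delta^\pi_\bullet/}$ of Lemma~\ref{lem:simplicial-enrichments} (with the abstract $\Delta_\bullet$ taken to be $R\Delta^\pi_\bullet$, so that $A \mapsto A\sqcup R\Delta^\pi_\bullet = A\Delta^\pi_\bullet$ in $s\PFSmAlg_R$) and then forgetting the structure map to $R\Delta^\pi_\bullet$, I obtain a simplicial functor
\[
\FSmAlg_R^\Delta \longrightarrow s\PFSmAlg_R \xrightarrow{\ s\dR\ } s\Ch(R)^\wedge.
\]
Taking simplicial nerves and using that $\FSmAlg_R^\Delta$ is fibrant by Proposition~\ref{prop:BCequiv}, this gives a functor of $\infty$-categories $\Nerve(\FSmAlg_R^\Delta) \to \Nerve(s\Ch(R)^\wedge)$; sending an object $A$ to the simplicial complex $n \mapsto \dR(A\Delta^\pi_n)$.

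Next I would compose with a realization functor $\Nerve(s\Ch(R)^\wedge) \to \D(R)^\wedge$. A simplicial object in $\Ch(R)^\wedge$ has an underlying simplicial object in $\D(R)^\wedge$, and the colimit over $\Delta^{\op}$ computes its geometric realization in the $\infty$-category $\D(R)^\wedge$; concretely one can model this by the totalization of the associated double complex followed by derived $\pi$-completion. The point is that the assignment "take the colimit over $\Delta^{\op}$" is functorial at the $\infty$-categorical level — it is the functor $\colim_{\Delta^{\op}}\colon \Fun(\Nerve(\Delta)^{\op}, \D(R)^\wedge) \to \D(R)^\wedge$, which exists because $\D(R)^\wedge$ admits all colimits (it is presentable). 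Precomposing the functor from the previous paragraph with this colimit functor yields
\[
\dR^\Delta\colon \Nerve(\FSmAlg_R^\Delta) \to \D(R)^\wedge,\qquad \dR^\Delta(A) = \colim_{\Delta^{\op}} \dR(A\Delta^\pi_\bullet),
\]
which is the asserted formula on objects. The fact that $\dR$ lands in derived $\pi$-complete complexes (hence so does the colimit) is already built into the target $\Ch(R)^\wedge$ as set up before Lemma~\ref{lem:bc-PD-dR}, and each $A\Delta^\pi_n = (A\widehat\otimes_R R\Delta^\pi_n, (\pi, T_0,\dots,T_n))$ is indeed an object of $\PFSmAlg_R$ by Example~\ref{ex:smform}, so the composite is well-defined.

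The main obstacle is bookkeeping rather than mathematical depth: one must check that the chain of simplicial functors above is genuinely simplicially enriched and that passing to simplicial nerves is legitimate, i.e.\ that $\Nerve$ applied to a simplicial functor between fibrant simplicial categories yields a functor of $\infty$-categories, and that $s\Ch(R)^\wedge$ (or a fibrant replacement of it) is a suitable source so that the realization functor to $\D(R)^\wedge$ is well-defined and agrees with $\colim_{\Delta^{\op}}$. One should also note that the target $\D(R)^\wedge$, being the underlying $\infty$-category of a combinatorial simplicial model category, can itself be presented by a fibrant simplicial category, so that the whole construction can be carried out at the level of simplicial categories and then nerved — but at this level of the paper it is cleaner to phrase the realization step directly in $\infty$-categorical language, invoking presentability of $\D(R)^\wedge$. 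No step requires any input beyond Lemma~\ref{lem:simplicial-enrichments}, Proposition~\ref{prop:BCequiv}, and standard facts about simplicial nerves and colimits in presentable $\infty$-categories.
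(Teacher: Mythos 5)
Your proposal follows essentially the same route as the paper: pass to a simplicial functor $\FSmAlg_R^\Delta \to s\PFSmAlg_R \to s\Ch(R)$ via Lemma~\ref{lem:simplicial-enrichments}, take simplicial nerves, and then geometrically realize (i.e.\ take $\colim_{\Delta^{\op}}$) into $\D(R)^\wedge$. However, the step you flag as ``bookkeeping'' — constructing a functor of $\infty$-categories $\Nerve(s\Ch(R)) \to \D(R)$ that implements geometric realization — is the actual mathematical content of the lemma, and you leave it unfilled. The difficulty is that $s\Ch(R)$ is \emph{not} a fibrant simplicial category (its mapping simplicial sets are not Kan), so one cannot simply say ``$\Nerve$ of a functor between fibrant simplicial categories is a functor of $\infty$-categories.'' The paper resolves this by invoking \cite[Prop.~1.3.4.7]{halg}: for an additive category, $\Nerve(s\Ch(R))$ is the Dwyer--Kan localization of the $1$-category $s\Ch(R)$ at the simplicial homotopy equivalences. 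Since the total complex functor $\mathrm{Tot}\colon s\Ch(R) \to \Ch(R)$ sends simplicial homotopy equivalences to quasi-isomorphisms, the composite $s\Ch(R) \to \Ch(R) \to \D(R)$ factors uniquely through a functor $t\colon \Nerve(s\Ch(R)) \to \D(R)$. That is the precise statement behind your ``realization functor,'' and without some such argument the construction is incomplete.

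A second, smaller issue: you carry out the whole construction inside $\Ch(R)^\wedge$ and $s\Ch(R)^\wedge$. This is awkward, because the total complex involves infinite direct sums, and derived $\pi$-completeness is not stable under them; so the realization of a simplicial object of $\Ch(R)^\wedge$ need not land in $\Ch(R)^\wedge$. The paper sidesteps this by running the argument in $\Ch(R)$ and $\D(R)$ first and only applying the $\pi$-completion $\D(R) \to \D(R)^\wedge$ at the end, using that completion is a left adjoint to commute it past the colimit and obtain the asserted formula $\dR^\Delta(A) = \colim_{\Delta^{\op}} \dR(A\Delta^\pi_\bullet)$. You should adopt the same ordering of the two localizations (geometric realization first, $\pi$-completion last).
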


\begin{proof}
The PD-de Rham complex as studied in the previous section defines a functor $\dR\colon \PFSmAlg_{R} \to \Ch(R)$ where $\Ch(R)$ denotes the category of chain complexes of $R$-modules. We now consider the composite of simplicial functors
\[
\FSmAlg_{R}^{\Delta} \to \PFSmAlg_{R}^{\Delta} \to s\PFSmAlg_{R,\Delta_{\bullet}^{\pi}/} \to s\PFSmAlg_{R} \to s\Ch(R).
\] 
Here the first functor is given by $A \mapsto (A, (\pi))$, the second is the one from Lemma~\ref{lem:simplicial-enrichments}, the third one is the canonical one, and the last one is induced by $\dR$. Applying the simplicial nerve, we obtain a functor of $\infty$-categories $\Nerve(\FSmAlg_{R}^{\Delta}) \to \Nerve(s\Ch(R))$. According to \cite[Prop.~1.3.4.7]{halg}, the latter category is the Dwyer-Kan-localization of the 1-category $s\Ch(R)$ at the class of simplicial homotopy equivalences. As the total complex functor $\mathrm{Tot}\colon s\Ch(R) \to \Ch(R)$ sends simplicial homotopy equivalences to quasi-isomorphisms, the composite $s\Ch(R) \to \Ch(R) \to \D(R)$ factors uniquely through a functor $t\colon \Nerve(s\Ch(R)) \to D(R)$. We can thus construct the desired functor $\dR^{\Delta}$ as the composite
\[
\Nerve(\FSmAlg_{R}^{\Delta}) \to \Nerve(s\Ch(R)) \xrightarrow{t} \D(R) \to \D(R)^{\wedge}
\]
where the last functor is the $\pi$-adic completion. 
As the total complex is a model for the (homotopy) colimit over $\Delta^{\op}$ (see e.g.~\cite[Cor.~3.13]{Arakawa:2023aa}), and as
the $\pi$-adic completion preserves colimits, we get the claimed description of the functor on objects.
\end{proof}

It follows from Proposition~\ref{prop:basicpdder}(3) that the functor $\dR^{\Delta}$ satisfies descent with respect to the étale topology. By the universal property of the $\infty$-category of sheaves, the Yoneda embedding induces an equivalence
\[
\Fun^{\mathrm{L}}(\Sh(\Nerve(\FSmAff^{\Delta}_{R}))^{\op}, \D(R)^{\wedge}) \xrightarrow{\simeq} \Fun^{\mathrm{desc}}(\Nerve(\FSmAlg^{\Delta}_{R}), \D(R)^{\wedge})
\]
where on the left-hand side we consider left adjoint functors and on the right-hand side functors satisfying étale descent. As the left-hand $\infty$-category identifies with $\Fun^{\mathrm{L}}(\Sh(\Sm_{\overline R})^{\op}, \D(R)^{\wedge})$ by Proposition~\ref{prop:equiv-sheaves}, the functor $\dR^{\Delta}$ uniquely extends to a functor $\dR^{\Delta} \colon \Sh(\Sm_{\overline R})^{\op} \to \D(R)^{\wedge}$.

\begin{defn}
	\label{def:crys}
We define crystalline cohomology as the functor
\[
\Cris\colon \Sm_{\overline R}^{\op} \to \D(R)^{\wedge}
\]
which is the composite of $\dR^{\Delta}$ with the Yoneda embedding $\Sm_{\overline R} \to \Sh(\Sm_{\overline R})$.
\end{defn}

\section{Comparison}

Let the notation be as above, i.e.\  $R$ be a $\pi$-adically complete ring, 
where $\pi\in R$ is a non-zero divisor, 
$(\pi)$ is endowed with a PD-structure,  and $p$ is nilpotent in $R/(\pi)$.

In this section we compare our functor $\mathrm{Cris}$ from Section~\ref{sec:newapcris} to
the de Rham functor $\dR\colon \fSm_R^\op \to \D(R)^\wedge$ on the category $\fSm_R$ of 
 $\pi$-adic formal schemes over $R$ which are topologically of finite type and formally
 smooth over $R$ and to Berthelot's crystalline
cohomology \cite{BerthelotCris}
\[
  \mathrm{BCris}\colon \Sm^\op_{\overline R}\to \D(R)^\wedge, \quad \overline
  X\mapsto  \lim_n R\Gamma_{\rm cris}(\overline X/R_n,\mathcal O^{\rm cris}).
  \]

  \begin{prop}
    There is a canonical isomorphism of functors
    \[
 \dR  \xrightarrow\sim \Cris\circ (- \otimes_R \overline R)\colon  \fSm_R^\op \to \D(R)^\wedge.
    \]
  \end{prop}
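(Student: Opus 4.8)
The plan is to reduce the statement, via étale descent on both sides, to the case of an affine formal scheme $\Spf(A)$ with $A \in \FSmAlg_R$, where it becomes a concrete comparison of the PD-de Rham complex $\dR(A,(\pi))$ with the simplicial object $\dR^\Delta(\overline A) = \colim_{\Delta^{\op}} \dR(A\Delta^\pi_\bullet)$. Both functors $\dR$ on $\fSm_R^{\op}$ and $\Cris \circ (-\otimes_R \overline R)$ satisfy étale descent — the former by Proposition~\ref{prop:basicpdder}(3) (applied with $I = (\pi)$) and the latter by construction in Section~\ref{sec:newapcris} — so by the equivalence of sheaf categories in Proposition~\ref{prop:equiv-sheaves} it suffices to produce a natural equivalence on the site of smooth affines and check compatibility with restriction. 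The natural transformation itself should come from the augmentation $A\Delta^\pi_\bullet \to A$ of simplicial objects in $\PFSmAlg_R$: each structure map $(\pi) \subset (\pi, T_0, \dots, T_m)$ makes $(A, (\pi)) \to (A\widehat\otimes_R R\Delta^\pi_m, (\pi, T_0,\dots,T_m))$ a morphism in $\PFSmAlg_R$, and the cosimplicial object $\dR(A\Delta^\pi_\bullet)$ receives a map from the constant object $\dR(A,(\pi))$, inducing $\dR(A,(\pi)) \to \colim_{\Delta^{\op}}\dR(A\Delta^\pi_\bullet) = \dR^\Delta(\overline A)$ after $\pi$-completion.

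The key computation is then that this augmentation map is an equivalence in $\D(R)^\wedge$ for each fixed $A$. Here I would invoke Proposition~\ref{prop:basicpdder}(2): for each $m$, the morphism $(A,(\pi)) \to (A\widehat\otimes_R R\Delta^\pi_m, (\pi,T_0,\dots,T_m))$ in $\PFSmAlg_R$ induces on special fibres the map $\overline A \to \overline A\otimes_{\overline R}(\overline R\Delta^\pi_m)$, and one checks this is an isomorphism: indeed $\overline R\Delta^\pi_m = \overline R\llbracket T_0,\dots,T_m\rrbracket/(T_0+\cdots+T_m)$ modulo the ideal $(\pi, T_0, \dots, T_m)$-adic identification, but more precisely since we are passing to $B/I$ with $I = (\pi, T_0,\dots,T_m)$, we get $(A\widehat\otimes_R R\Delta^\pi_m)/I = A/(\pi) = \overline A$, because quotienting by all the $T_i$ and by $\pi$ kills the power series variables. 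Thus each term of the cosimplicial object $\dR(A\Delta^\pi_\bullet)$ is equivalent, via the augmentation, to the constant object $\dR(A,(\pi))$ — this is exactly Proposition~\ref{prop:basicpdder}(2) since the relevant special-fibre map is an isomorphism. A cosimplicial object that is termwise equivalent to a constant one via the augmentation has colimit (over $\Delta^{\op}$) equivalent to that constant value; more carefully, one should check the equivalences are compatible with the cosimplicial structure maps, so that we get an equivalence of cosimplicial objects $\mathrm{const}(\dR(A,(\pi))) \xrightarrow{\sim} \dR(A\Delta^\pi_\bullet)$, whence an equivalence on colimits.

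Finally I would verify naturality in $A$ and compatibility with the étale-descent extension: the augmentation $A\Delta^\pi_\bullet \to A$ is visibly natural in $A \in \FSmAlg_R$, and since the de Rham functor on $\fSm_R$ and the functor $\dR^\Delta$ are both extended from smooth affines by the same right Kan extension / sheafification procedure along the equivalence $\Nerve(\FSmAff^\Delta_R) \simeq \SmAff_{\overline R}$, a natural equivalence on affines extends uniquely to the desired natural equivalence of functors on $\fSm_R^{\op}$. The main obstacle I anticipate is not any single estimate but the bookkeeping of $\infty$-categorical coherence: upgrading the levelwise equivalence $\dR(A,(\pi)) \xrightarrow{\sim} \dR(A\Delta^\pi_m)$ to an equivalence of cosimplicial (or, dually, simplicial) objects — equivalently, checking that $\dR^\Delta(A)$ really is computed by a colimit over a diagram all of whose terms and all of whose maps are equivalences — and then matching this with the way $\dR^\Delta$ is defined in Lemma~\ref{lem:drdelta} via the total complex and the Dwyer–Kan localization. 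One clean way around this is to observe that the simplicial object $R\Delta^\pi_\bullet \to \overline R$ is a trivial Kan fibration (Lemma~\ref{lem:simplring2}), so that $A\Delta^\pi_\bullet$ is, in the appropriate homotopical sense, a resolution of $\overline A$ whose terms are the objects of Example~\ref{ex:smform}, and to transport the identity $\dR(A,(\pi)) \simeq \dR(A\Delta^\pi_m,(\pi,T_\bullet))$ through that structure.
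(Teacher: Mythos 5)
Your proposal is correct and takes essentially the same route as the paper: reduce to affines by descent, build the comparison map from the augmentation of the constant simplicial object on $A$ into $A\Delta^\pi_\bullet$, and apply Proposition~\ref{prop:basicpdder}(2) to conclude it is a levelwise equivalence. A minor terminological slip aside --- since $\dR$ is covariant, $\dR(A\Delta^\pi_\bullet)$ is a simplicial rather than cosimplicial object, whence the $\colim_{\Delta^{\op}}$ --- the coherence concern you raise at the end is harmless, because the levelwise equivalences are induced by an honest $1$-categorical map of simplicial objects in $s\PFSmAlg_R$.
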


  \begin{proof}
    By Zariski descent it is sufficient to construct this isomorphism after restriction to
    $\FSmAff_R$. For $A$ in $\FSmAlg_R$ the functor on the left gives $\dR(A)$ and the
    functor on the right gives $\colim_{\Delta^\op} \dR(A\Delta^\pi_\bullet)$ by
    Lemma~\ref{lem:drdelta}. The canonical morphism of simplicial rings
    $A\to A\Delta^\pi_\bullet$ induces the requested natural transformation $ \dR  \to \Cris\circ (- \otimes_R \overline R)$. 
    Proposition~\ref{prop:basicpdder} implies that it is an equivalence.
  \end{proof}

\begin{prop}
  For $\overline X \in \Sm_{\overline R}$ there is a canonical isomorphism
  \[
\Cris(\overline X) \cong \mathrm{BCris}(\overline X).
\]
\end{prop}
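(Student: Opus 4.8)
The plan is to reduce to the affine case and then match both sides term by term with the \v{C}ech–Alexander (or de Rham–\v{C}ech) computation of Berthelot's crystalline cohomology. By Zariski descent on $\overline X$ --- which both functors satisfy, $\Cris$ by construction and $\mathrm{BCris}$ by the \'etale (hence Zariski) descent of crystalline cohomology --- it suffices to produce a natural isomorphism when $\overline X = \Spec \overline A$ is affine and small enough to admit a lift. So fix $\overline A \in \SmAlg_{\overline R}$ and a lift $A \in \FSmAlg_R$, so that $\overline A = A/\pi$. By Lemma~\ref{lem:drdelta}, $\Cris(\overline X) = \colim_{\Delta^\op} \dR(A\Delta^\pi_\bullet)$, where the $m$-th term is the PD-de Rham complex of $(A\widehat\otimes_R R\Delta^\pi_m, (\pi, T_0, \dots, T_m))$.

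First I would recall the standard computation of $\mathrm{BCris}(\overline X)$ via a \v{C}ech–Alexander resolution: choosing a surjection from a (PD-)polynomial algebra onto $\overline A$ and taking its \v{C}ech nerve in the crystalline site, one gets that $R\Gamma_{\mathrm{cris}}(\overline X/R_n)$ is computed by the cosimplicial complex $[m] \mapsto \Omega^*_{\D(\text{$(m{+}1)$-fold PD envelope})/R_n}$, and then one takes $\lim_n$. The key point is that the specific PD thickenings appearing in the simplicial object $A\Delta^\pi_\bullet$ --- namely the PD envelopes of $(\pi, T_0, \dots, T_m)$ in $A\widehat\otimes_R R\llbracket T_0, \dots, T_m\rrbracket/(\sum T_i - \pi)$ --- form a cofinal (in the appropriate $\infty$-categorical sense) family of affine PD thickenings of $\overline X$ mapping to the same object, so that the colimit over $\Delta^\op$ of their de Rham complexes computes the same thing as the \v{C}ech–Alexander complex. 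Concretely, $A\Delta^\pi_m$ is a lift of $\overline A$ together with $m$ extra coordinates, and its PD envelope along $(\pi, T_0,\dots,T_m)$ is, \'etale-locally, a PD polynomial algebra over a lift of $\overline A$ (this is exactly the local structure established in the proof of Lemma~\ref{lem:bc-PD-dR}); hence $\dR(A\Delta^\pi_m)$ is equivalent, via the Poincar\'e lemma of Proposition~\ref{prop:basicpdder}(1)+(2), to the $\pi$-completed de Rham complex of that lift. One then identifies the resulting simplicial object of complexes with the one coming from the iterated self-products of a fixed embedding of $\overline X$ into a smooth formal scheme, whose totalization is $\mathrm{BCris}(\overline X)$.

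The cleanest way to organize this may be to exhibit both $\Cris$ and $\mathrm{BCris}$ as the left Kan extension from $\SmAff_{\overline R}$ (equivalently $\Nerve(\FSmAff^\Delta_R)$) of a common functor, using the universal property already invoked in Section~\ref{sec:newapcris}: since $\mathrm{BCris}$ satisfies \'etale descent, it corresponds to a colimit-preserving functor $\Sh(\Sm_{\overline R})^\op \to \D(R)^\wedge$, and to identify it with the extension of $\dR^\Delta$ it is enough to produce a natural equivalence $\dR^\Delta(A) \simeq \mathrm{BCris}(\Spec \overline A)$ for $A \in \FSmAlg_R$, compatibly with maps. For this one uses that, given a lift $A$ of $\overline A$, the object $A$ itself (viewed as a formal $R$-scheme) provides an affine PD thickening of $\overline X$ after passing to PD envelopes --- more precisely, $\D_{(\pi)}(A_n)$ --- and the \v{C}ech nerve of $\Spec \overline A \hookrightarrow \Spf A$ in the crystalline site has $m$-th term the $(m{+}1)$-fold fibre product, whose PD-de Rham complex is exactly $\dR$ of the $(m{+}1)$-fold coproduct of $(A,(\pi))$ in $\PFSmAlg_R$, i.e.\ of $(A\widehat\otimes_R\cdots\widehat\otimes_R A, (\pi))$. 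It remains to compare $A^{\widehat\otimes(m+1)}$ with $A\widehat\otimes_R R\Delta^\pi_m$: both are lifts-with-extra-coordinates of $\overline A$, and a standard change-of-coordinates argument (the kernel of $A^{\widehat\otimes(m+1)} \to \overline A$ versus the kernel $(\pi, T_0,\dots,T_m)$) together with Proposition~\ref{prop:basicpdder}(2) shows their PD-de Rham complexes agree, simplicially in $[m]$; taking $\colim_{\Delta^\op}$ on one side and $\mathrm{Tot}$ on the other --- which agree by the same total-complex-computes-the-colimit fact used in Lemma~\ref{lem:drdelta} --- yields the desired isomorphism.

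The main obstacle I anticipate is the bookkeeping in this last comparison: making the identification $A^{\widehat\otimes(m+1)} \simeq A\widehat\otimes_R R\Delta^\pi_m$ (or rather, the identification of their PD-de Rham complexes) natural and coherent as a map of simplicial objects, not just degreewise. The coordinate $T_i$ on $R\Delta^\pi_\bullet$ should correspond to the difference of the $i$-th and a base factor in the iterated tensor product, modulo the relation $\sum T_i = \pi$ coming from the PD-compatibility with $(\pi)$; checking that the cosimplicial structure maps (faces = partial diagonals/projections in the \v{C}ech nerve) match the simplicial structure maps $\sigma(T_i) = \sum_{j \in \sigma^{-1}(i)} T_j$ of $R\Delta^\pi_\bullet$ is where the real content lies. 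Once this dictionary between the \v{C}ech–Alexander simplicial object and $A\Delta^\pi_\bullet$ is in place, everything else is an application of the already-established Poincar\'e lemma, base change, and descent results of Proposition~\ref{prop:basicpdder}, plus the standard fact that \v{C}ech–Alexander complexes compute $\mathrm{BCris}$.
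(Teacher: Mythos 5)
Your reduction to affines by Zariski descent matches the paper, and the ingredients you invoke (Berthelot's comparison $\mathrm{BCris}(B/I)\cong\dR(B,I)$, the Poincar\'e lemma, Proposition~\ref{prop:basicpdder}) are exactly the right ones. But the strategy you then pursue is genuinely different and substantially harder than what the paper does, and it is worth seeing why.

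You try to exhibit $\Cris(\overline X)$ and $\mathrm{BCris}(\overline X)$ as totalizations/colimits of two different simplicial objects --- $A\Delta^\pi_\bullet$ on one side and the \v{C}ech--Alexander object $A^{\widehat\otimes(\bullet+1)}$ on the other --- and then to match them up degree by degree and coherently. You correctly flag that the real work is the ``bookkeeping'' of making this identification simplicial: the face and degeneracy maps of $R\Delta^\pi_\bullet$ ($T_i\mapsto\sum_{j\in\sigma^{-1}(i)}T_j$) do not obviously line up with the partial diagonals and projections of the \v{C}ech nerve, and there is no evident natural map of simplicial objects in $s\PFSmAlg_R$ between the two. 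That obstacle is avoidable.

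The paper sidesteps the entire comparison of simplicial objects. Since Berthelot's isomorphism $\mathrm{BCris}(B/I)\cong\dR(B,I)$ is \emph{natural} in $(B,I)\in\PFSmAlg_R$, you may simply apply it levelwise to the one simplicial object you already have, $(B_\bullet,I_\bullet)=A\Delta^\pi_\bullet$, to obtain
\[
\colim_{\Delta^\op}\dR(A\Delta^\pi_\bullet)\;\cong\;\colim_{\Delta^\op}\mathrm{BCris}\bigl(A\Delta^\pi_\bullet/I_\bullet\bigr).
\]
Now the key observation is that the mod-$I$ reduction is \emph{constant}: for every $m$ one has
\[
\bigl(A\,\widehat\otimes_R\,R\Delta^\pi_m\bigr)/(\pi,T_0,\dots,T_m)\;=\;A/\pi\;=\;\overline A,
\]
so the simplicial object $\mathrm{BCris}(A\Delta^\pi_\bullet/I_\bullet)$ is the constant simplicial object with value $\mathrm{BCris}(\overline A)$, whose colimit over $\Delta^\op$ is just $\mathrm{BCris}(\overline A)$. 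No \v{C}ech--Alexander resolution, no cofinality argument, and no dictionary between $A^{\widehat\otimes(m+1)}$ and $A\widehat\otimes_R R\Delta^\pi_m$ is needed. Your approach is not wrong --- with enough care the identification could presumably be carried out, and your instinct that each $\dR(A\Delta^\pi_m)$ is individually equivalent to $\mathrm{BCris}(\overline A)$ via Poincar\'e lemma and base change is correct --- but you have reinvented a harder road to a destination the naturality of Berthelot's comparison reaches in one step.
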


\begin{proof}
By Zariski descent it is sufficient to construct this isomorphism after restriction to
$\SmAff_{\overline R}$.
Berthelot constructs a natural comparison isomorphism
\[
\mathrm{BCris}(B/I) \cong \dR(B,I) \text{ in } \D(R)^\wedge
\]
of his
crystalline cohomology to  PD-de Rham cohomology for $(B,I)$ in $\PFSmAlg_R$, see \cite[Thm.~V.2.3.2]{BerthelotCris},
\cite{BhattdeJong}, \cite[Prop.~07LG]{stacks-project}.
For any simplical object $(B_\bullet,I_\bullet)$ in  $\PFSmAlg_R$ we get an induced
isomorphism
\[
\colim_{\Delta^{\rm op}} \dR(B_\bullet, I_\bullet) \cong  \colim_{\Delta^{\rm op}} \mathrm{BCris}(B_\bullet/I_\bullet) .
\]
Taking $B_\bullet =
A\Delta_\bullet^\pi$ for $A$ in $\FSmAlg_R$ one gets the required equivalence.
\end{proof}

\bibliographystyle{amsalpha}
\bibliography{cryscoh}

\end{document}